\DeclareMathAlphabet{\mathscrbf}{OMS}{mdugm}{b}{n}
\tikzset{
  no line/.style={draw=none,
    commutative diagrams/every label/.append style={/tikz/auto=false}},
  from/.style args={#1 to #2}{to path={(#1)--(#2)\tikztonodes}}}
\title{Prismatization via spherical loop spaces}
\author{Rok Gregoric}
\thanks{Johns Hopkins University}
\date{\today}
\address{Johns Hopkins University, Baltimore, MD 21218, USA}
\email{rgrego12@jhu.edu}
\newtheorem{theorem}{Theorem}[section]
\newtheorem*{theoremm}{Main Theorem}
\newtheorem{theoremmm}{Theorem}
\newtheorem{lemma}[theorem]{Lemma}
\newtheorem{prop}[theorem]{Proposition}
\theoremstyle{definition}
\newtheorem{cons}[theorem]{Construction}
\newtheorem{summary}[theorem]{Summary}
\newtheorem{remark}[theorem]{Remark}
\newcommand*{\CAlg}{{\operatorname{CAlg}}}
\newcommand*{\mM}{\mathcal M}
\newcommand*{\mX}{\mathcal X}
\newcommand*{\sO}{\mathcal O}
\newcommand*{\E}{\mathbb E_\infty}
\newcommand*{\heart}{\heartsuit}
\newcommand*{\sheafhom}{\mathscr{H}\kern -.5pt om}
\DeclareMathOperator{\Novak}{\mathscr{N}\text{\kern -3pt {\calligra\large ovak}}\,\,}
\DeclareMathOperator{\fHom}{\mathscr{H}\text{\kern -3pt {\calligra\large om}}\,}
\DeclareMathOperator{\Fun}{\operatorname{Fun}}
\DeclareMathOperator{\Spec}{\operatorname{Spec}}
\DeclareMathOperator{\Spf}{\operatorname{Spf}}
\DeclareMathOperator{\Map}{\operatorname{Map}}
\DeclareMathOperator{\QCoh}{\operatorname{QCoh}}
\DeclareMathOperator{\G}{\mathbf G}
\DeclareMathOperator{\Z}{\mathbf Z}
\DeclareMathOperator{\Q}{\mathbf Q}
\DeclareMathOperator{\Mod}{\operatorname{Mod}}
\renewcommand{\i}{\infty}
\newcommand{\T}{\mathbf T}
\newcommand{\w}{\widehat}
\renewcommand{\i}{\infty}
\def\@tocline#1#2#3#4#5#6#7{\relax
  \ifnum #1>\c@tocdepth 
  \else
    \par \addpenalty\@secpenalty\addvspace{#2}%
    \begingroup \hyphenpenalty\@M
    \@ifempty{#4}{%
      \@tempdima\csname r@tocindent\number#1\endcsname\relax
    }{%
      \@tempdima#4\relax
    }%
    \parindent\z@ \leftskip#3\relax \advance\leftskip\@tempdima\relax
    \rightskip\@pnumwidth plus4em \parfillskip-\@pnumwidth
    #5\leavevmode\hskip-\@tempdima
      \ifcase #1
       \or\or \hskip 1em \or \hskip 2em \else \hskip 3em \fi%
      #6\nobreak\relax
    \dotfill\hbox to\@pnumwidth{\@tocpagenum{#7}}\par
    \nobreak
    \endgroup
  \fi}
\renewcommand{\i}{\infty}
\renewcommand{\o}{\otimes}
\newcommand{\stackspace}{3.5}
\newcommand{\stack}[2][1cm]{\;\tikz[baseline, yshift=.65ex]%
    {\foreach \k [evaluate=\k as \r using (.5*#2+.5-\k)*\stackspace] in {1,...,#2}{%
    \ifodd\k{\draw[<-](0,\r pt)--(#1,\r pt);}%
    \else{\draw[->](0,\r pt)--(#1,\r pt);}\fi
    }}\;}
\DeclareSymbolFontAlphabet{\mathbb}{AMSb}
\DeclareSymbolFontAlphabet{\mathbbl}{bbold}
\newcommand*{\doublerightarrow}[2]{\mathrel{
  \settowidth{\@tempdima}{$\scriptstyle#1$}
  \settowidth{\@tempdimb}{$\scriptstyle#2$}
  \ifdim\@tempdimb>\@tempdima \@tempdima=\@tempdimb\fi
  \mathop{\vcenter{
    \offinterlineskip\ialign{\hbox to\dimexpr\@tempdima+1em{##}\cr
    \rightarrowfill\cr\noalign{\kern.5ex}
    \rightarrowfill\cr}}}\limits^{\!#1}_{\!#2}}}
\begin{document}

\begin{abstract}
We introduce Frobenius-untwists of the variants of topological cyclic homology, following Manam.
Using these, we construct  modifications of the free loop space over the sphere spectrum, and show that they provide even periodic spectral enhancements of the prismatization stacks of Bhatt-Lurie and Drinfeld. 
We identify  the extra structure on prismatization encoded in the even periodic enhancements  with previously-known structures, such as the Breuil-Kisin twists and the Drinfeld formal group.
\end{abstract}

\maketitle



\section*{Introduction}

In a broad sense, this paper concerns the connection between \textit{Hodge theory in mixed characteristic}, and \textit{cyclic homology over the sphere spectrum}.

In a narrow sense, this paper constructs \textit{even periodic enhancements} to certain periodization stacks of Bhatt-Lurie \cite{BLb}, \cite{Bhatt F-gauges} and Drinfeld \cite{Drinfeld}, which in turn geometrize the prismatic cohomology of Bhatt-Scholze \cite{BS22}. More precisely, the following is our main result:

\begin{theoremm}[{Theorem \ref{Main Theorem in Text}}]
Let $X$ be a quasi-syntomic $p$-adic formal scheme.
There exist canonical even periodic formal spectral stacks 
\begin{normalfont} $\pounds X, \text{\textlira} X, \$ X,$\end{normalfont} such that their
 underlying classical  stacks  may be identified with the prismatization stacks as
$$
(\begin{normalfont}\pounds\end{normalfont}X)^\heart \,\simeq\, X^{\mathbbl{\Delta}}, \quad (\begin{normalfont}\text{\textlira}\end{normalfont}X)^\heart \,\simeq \,X^{\mathcal N}, \quad
(\begin{normalfont}\$\end{normalfont}X)^\heart \,\simeq\, X^{\mathrm{Syn}}.
$$
On the $\E$-rings of global functions, these recover Frobenius-untwisted versions of the variants of topological cyclic homology
$$
\mathcal O(\begin{normalfont}\pounds\end{normalfont}X)\,\simeq\, \mathrm{TP}^{(-1)}(X), \quad
\mathcal O(\begin{normalfont}\text{\textlira}\end{normalfont}X)\,\simeq\, \mathrm{TC}^{-(-1)}(X), \quad
\mathcal O(\begin{normalfont}\$\end{normalfont}X)\,\simeq\, \mathrm{TC}(X)^\wedge_p.
$$
The Quillen formal group on the even periodic spectral stacks \begin{normalfont} $\pounds X, \text{\textlira} X, \$ X,$\end{normalfont} recover the Drinfled formal group \cite{Drinfeld FG} on the underlying classical stacks $X^{\mathbbl\Delta}, X^{\mathcal N}, X^\mathrm{Syn}$.
\end{theoremm}

In the rest of the introduction, we  explain how the narrow sense fits into the broad sense. That is to say, we explain how the Main Theorem provides a link between Hodge theory and cyclic homology, and place it in the historical context.

\renewcommand*{\thesubsection}{I.\arabic{subsection}}

\subsection{The story in characteristic zero}
A connection between Hodge theory and cyclic homology is classical over a field of characteristic zero $k$. The starting point is
the  celebrated \textit{Hochschild-Kostant-Rosenberg theorem}, which expresses the relative Hochschild homology of a smooth $k$-algebra $A$ over $k$ in terms of differential forms on $A$ as
$$
\mathrm{HH}(A/k)\,\simeq \, \bigoplus_{i\in \Z}\Omega^i_{A/k}[i].
 $$
 The action of the circle group $\T$ on the left-hand side corresponds to the de Rham differential $d_\mathrm{dR}$ on the right-hand side. Recall that the \textit{periodic cyclic homology} and \textit{negative cyclic homology} are defined in terms of the circle action on Hoschschild homology respectively as the Tate construction and the homotopy fixed-points
 $$
 \mathrm{HP}\,:=\,\mathrm{HH}^{\mathrm t\T}, \qquad\quad \mathrm{HC}^-\,:=\,\mathrm{HH}^{\mathrm h\T}.
 $$
 Following Connes, Loday-Quillen, and Feigin-Tsygan, we may express these for a smooth $k$-algebra $A$ in terms of the de Rham complex $\mathrm{dR}_{A/k} = (\Omega^*_{A/k}, d_\mathrm{dR})$ and its Hodge filtration as
 \begin{equation}\label{PreBMS2}
\mathrm{HP}(A/k)\,\simeq \, \bigoplus_{i\in \Z}\mathrm{dR}_{A/k}[2i], \qquad \mathrm{HC}^-(A/k)\,\simeq \, \bigoplus_{i\in \Z}\mathrm{Fil}_\mathrm{Hodge}^i\, \mathrm{dR}_{A/k}[2i].
 \end{equation}
 
Away from characteristic zero, the strongest form of the HKR theorem fails \cite{ABM}. The connection still persists, but in the weaker form of HKR filtrations. This is not a particular point of interest in this paper.

\subsection{Hodge theory in mixed characteristic -- prismatic cohomology}
 The main focus of our paper concerns the setting of ``integral mixed characteristic". More precisely, this concerns $p$-adic formal schemes: formal schemes over $\Z_p$ (hence ``integral", since we are not considering e.g.\ $\Q_p$), where the formal structure is all given exclusively by the $p$-adic topology.

 In this setting, Hodge theory admits a particularly elegant manifestation in the form of the \textit{prismatic cohomology} $\mathrm R\Gamma_{\mathbbl\Delta}$ of \cite{BS22}. This remarkable cohomology \linebreak theory  specializes to virtually all previously-considered cohomology theories for $p$-adic spectral schemes, including \'etale cohomology, crystalline cohomology, $q$-de Rham cohomology, $\mathbf A_\mathrm{inf}$-cohomology. It also comes equipped with extra structure, reminiscent of more familiar objects in Hodge theory:
 \begin{itemize}
\item The \textit{Breuil-Kisin twists} $\{i\}$, prismatic analogues of the Tate twists $(i)$.
\item The \textit{Nygaard filtration} $\mathrm{Fil}^i_\mathcal N$, a prismatic analogue of the Hodge filtration.
 \end{itemize}
 
 The original site-theoretic approach to prismatic cohomology of  \cite{BS22} relies on the algebraic notion of a prism. In the special case of a \textit{quasi-regular semiperfectoid} (\textit{qrsp} for short) ring $R$,  prismatic cohomology is
 given as
 $$
 \mathrm R\Gamma_{\mathbbl \Delta}(\Spf(R))\,\simeq\, \mathbbl\Delta_R
 $$
 in terms of the single prism $\mathbbl\Delta_R$, determined by a universal property. The perhaps somewhat exotic class of qrsp rings plays an important role, because a large class of $p$-adic formal schemes -- namely the quasi-syntomic ones, which includes for instance all smooth $p$-adic formal schemes -- admit covers by qrsp affine opens. The simple form of prismatic cohomology for qrsp rings therefore makes them very convenient for prismatic computations via \textit{quasi-syntomic descent}.

\subsection{Hodge theory via stacks -- prismatization}
In his work on nonabelian Hodge theory \cite{Simpson}, Simpson introduced the \textit{de Rham stack} $X^\mathrm{dR}$ of a scheme $X$ over a field $k$ of characteristic zero. It encodes the de Rham cohomology of $R$ as its $\E$-ring of global functions  $\mathcal O(X)\, \simeq \, \mathrm R\Gamma_\mathrm{dR}(X/k)$, but its geometry is useful in the study of $X$, e.g.\ quasi-coherent sheaves on it correspond to D-modules on $X$, etc. The  Hodge filtration of the de Rham cohomology is encoded by a further stack $X^{\mathrm{dR}, +}\to \mathbf A^1/\mathbf G_m$ (classically denoted $X^\mathrm{Hod}$), which has $X^\mathrm{dR}$ as its generic fiber, and the shifted tangent bundle $\mathrm T[1]X \simeq \mathrm B_X\mathrm TX$ as its special fiber. A characteristic $p$ variant of the stack $X^\mathrm{dR}$ was found by Drinfeld in \cite{Drinfeld stack}.

Inspired by this, Bhatt-Lurie \cite{BLb}, \cite{Bhatt F-gauges} and Drinfeld \cite{Drinfeld} introduced the analogous stacky approach to prismatic cohomology. To a $p$-adic formal scheme $X$, they associate the trio of \textit{prismatization stacks}
$$
X\mapsto X^{\mathbbl\Delta},\, X^{\mathcal N},\, X^\mathrm{Syn},
$$
respectively called the \textit{prismatization of $X$}, the \textit{filtered prismatization of $X$}, and the \textit{syntomification of $X$}.
They geometrically encode the properties of and structure on prismatic cohomology. For instance, the global functions on prismatization recover prismatic cohomology as $\sO(X^{\mathbbl\Delta})\,\simeq\, \mathrm R\Gamma_{\mathbbl\Delta}(X)$, and a canonical map $X^{\mathcal N}\to \mathbf A^1/\mathbf G_m$ has $X^{\mathbbl \Delta}$ as the generic fiber, and it  encodes the Nygaard filtration.

The prismatization stacks are defined in \cite{Bhatt F-gauges} by \textit{transmutation} -- for a fixed $?\in \{\mathbbl\Delta, \mathcal N, \mathrm{Syn}\}$, this works roughly by first defining a (derived) ring stack $\mathbf G_a^{?}$ ``by hand", and then setting the $?$-prismatization of any $p$-adic formal scheme $X$ to be given in terms of the functor of points as $X^?(R) \,:=\, X(\G_a^?(R))$. But in the special case of a qrsp affine $X=\Spf(R)$, the first two of these stacks may be expressed explicitly through the simple formulas
\begin{equation}\label{Prism stacks via qsyn desc}
\Spf(R)^{\mathbbl\Delta}\,\simeq \,\Spf(\mathbbl\Delta_R), \qquad \Spf(R)^{\mathcal N}\,\simeq \, \Spf\big(\bigoplus_{i\in \mathbf Z}\mathrm {Fil}^i\,\mathbbl\Delta_R\big)/\mathbf G_m,
\end{equation}
and $\Spf(R)^{\mathrm{Syn}}$ is a certain coequalizer of maps between the former two. This gives a method for determining the prismatization stacks of any quasi-syntomic $p$-adic formal scheme via quasi-syntomic descent.

\subsection{The connection to topological cyclic homology}
\textit{Topological Hochschild homology} $\mathrm{THH}$ is the traditional name for the relative Hochschild homology $\mathrm{HH}(-/\mathbf S)$ over the sphere spectrum $\mathbf S$. By passing respectively to the homotopy fixed-points and the Tate construction with respect to the circle group action on $\mathrm{THH}$, we obtain the \textit{topological negative cyclic homology}  and
\textit{topological periodic cyclic homology}
$$
\mathrm{TC}^-\,:=\,\mathrm{THH}^{\mathrm h\T},\qquad\quad\mathrm{TP}\,:=\,\mathrm{THH}^{\mathrm t\T}.
$$
A further variant is the \textit{topological cyclic homology} $\mathrm{TC}$, which can be identified (at least $p$-typically) described as an equalizer of two maps from $\mathrm{TC}^-$ to $\mathrm{TP}$.

The landmark connection between these variants of topological cyclic homology and prismatic cohomology, somewhat reminiscent of the identifications \eqref{PreBMS2}, was made by Bhatt-Morrow-Scholze:

\renewcommand*{\thetheoremmm}{I.\arabic{theoremmm}}

\begin{theoremmm}[{\cite{BMS2}}]\label{Thm BMS2 iso}
Let $R$ be a qrsp ring. Then the $\E$-rings $\mathrm{TP}(R)^\wedge_p$ and $\mathrm{TC}^-(R)^\wedge_p$ are even, and their homotopy groups may be  naturally identified in terms of prismatic cohomology as
$$
\pi_{2i}(\mathrm{TP}(R)^\wedge_p)\, \simeq \, \widehat{\mathbbl \Delta}_R\{i\}, \qquad
\pi_{2i}(\mathrm{TC}^-(R)^\wedge_p)\, \simeq \, \mathrm{Fil}_{\mathcal N}^{ i}\,\widehat{\mathbbl \Delta}_R\{i\}.
$$
\end{theoremmm}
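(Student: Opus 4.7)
The plan is to adapt the strategy of \cite{BMS2}, reducing everything to the evenness of $\mathrm{THH}(R)^\wedge_p$ for qrsp $R$. First, I would establish that $\pi_*\mathrm{THH}(R)^\wedge_p$ is concentrated in even degrees. The base case is $R$ perfectoid, where B\"okstedt's computation combined with flat base change from $\mathrm{THH}(\mathbf F_p)$ gives that $\pi_*\mathrm{THH}(R)^\wedge_p$ is polynomial on a degree-two generator over $R$. For general qrsp $R$, choose a surjection $R_0 \twoheadrightarrow R$ from a perfectoid and observe that $\mathbf L_{R/R_0}$ is concentrated in homological degree $1$ and flat there. The HKR-type filtration on $\mathrm{THH}(R)$ relative to $\mathrm{THH}(R_0)$, whose associated graded involves symmetric powers of $\mathbf L_{R/R_0}[1]$, then transfers evenness from $R_0$ to $R$. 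Alternatively, quasi-syntomic descent from the perfectoid site accomplishes the same, exploiting the fact that the class of even $\E$-rings is closed under totalizations of cosimplicial diagrams with even terms and flat face maps.

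Having established evenness of $\mathrm{THH}$, the Tate and homotopy fixed-point spectral sequences for the $\T$-action collapse at $E_2$ for degree reasons. The $E_2$-page of the Tate spectral sequence is $\pi_*\mathrm{THH}(R)^\wedge_p[t^{\pm 1}]$ with $|t| = -2$, which is concentrated in even total degree, so every differential is forced to vanish. The identical argument handles $\mathrm{TC}^-(R)^\wedge_p$. This yields the evenness claim, and identifies
$$
\pi_{2i}\mathrm{TP}(R)^\wedge_p \,\simeq\, \bigoplus_{j\in\Z}\pi_{2(i+j)}\mathrm{THH}(R)^\wedge_p\cdot t^j,
$$
together with the analogous half-plane sum for $\mathrm{TC}^-$.

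To match these with prismatic cohomology, I would exhibit on $\pi_0\mathrm{TP}(R)^\wedge_p$ a prism structure satisfying the universal property characterizing $\widehat{\mathbbl\Delta}_R$. The cyclotomic Frobenius on $\mathrm{THH}(R)$ descends to a Frobenius lift on $\pi_0\mathrm{TP}(R)^\wedge_p$, producing a $\delta$-ring structure; the canonical map $\mathrm{TC}^-(R)^\wedge_p \to \mathrm{TP}(R)^\wedge_p$ picks out a distinguished invertible element in $\pi_2\mathrm{TP}(R)^\wedge_p$ whose associated ideal is the prism ideal. The Nygaard filtration $\mathrm{Fil}^i_{\mathcal N}$ then matches the homotopy fixed-point filtration on $\pi_{2i}\mathrm{TC}^-(R)^\wedge_p$ by direct comparison with the spectral-sequence filtration. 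The Breuil-Kisin twist $\{i\}$ records the natural $\T$-weight: it is the $i$-th tensor power of the canonical invertible $\pi_0\mathrm{TP}(R)^\wedge_p$-module $\pi_2\mathrm{TP}(R)^\wedge_p$.

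The main technical obstacle is the universal property verification in the third step: showing that the $\delta$-pair extracted from the cyclotomic structure really is the initial prism receiving a map from $R$. This requires substantial deformation-theoretic input, ultimately comparing the cyclotomic Frobenius with the Frobenius lift axiom in the definition of a prism, and constitutes the technical heart of \cite{BMS2}. The evenness argument in the first step is also nontrivial, as one needs tight control over how the cotangent complex interacts with the $\mathrm{THH}$-filtration under descent.
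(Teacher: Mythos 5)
The paper does not prove this statement: it is quoted verbatim from Bhatt--Morrow--Scholze and used as a black box, so there is no internal proof to compare against. Measured against the actual argument in \cite{BMS2}, your outline has the right overall shape -- evenness of $\mathrm{THH}(R)^\wedge_p$ via B\"okstedt's computation for the perfectoid base case plus the flatness of $\mathbf L_{R/R_0}[-1]$ for qrsp $R$, followed by degeneration of the homotopy fixed-point and Tate spectral sequences for parity reasons -- and that is indeed how the evenness claim is established there.

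Two points need correction, and the first is not cosmetic in the context of this paper. The Tate spectral sequence degenerates, but its abutment is the \emph{completion} of the associated graded, not the direct sum: $\pi_{2i}\mathrm{TP}(R)^\wedge_p$ carries a complete descending filtration with graded pieces $\pi_{2(i+j)}\mathrm{THH}(R)^\wedge_p\cdot t^j$, and one must take the limit, not $\bigoplus_j$. This completion is precisely the source of the Nygaard completion $\widehat{(-)}$ decorating the right-hand sides of the theorem; writing a direct sum erases the distinction between $\mathbbl\Delta_R$ and $\widehat{\mathbbl\Delta}_R$, which is the entire subject of the surrounding paper. Second, your third step proposes to identify $\pi_0\mathrm{TP}(R)^\wedge_p$ with $\widehat{\mathbbl\Delta}_R$ by verifying a prism universal property. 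But the initial prism over $R$ is $\mathbbl\Delta_R$, not its Nygaard completion; $\widehat{\mathbbl\Delta}_R$ is not characterized by such a universal property, and in general is not even a prism. In \cite{BMS2} the logic runs the other way: the ring $\widehat{\mathbbl\Delta}_R$ is essentially \emph{constructed} as $\pi_0\mathrm{TP}(R)^\wedge_p$ (together with its Nygaard filtration read off from $\mathrm{TC}^-$), and the comparison with the Nygaard completion of the site-theoretic prismatic cohomology of Bhatt--Scholze is a separate, later theorem. So the ``main technical obstacle'' you identify is framed around the wrong universal property; the actual content is the comparison of the THH-theoretic construction with the prismatic site, for which the cyclotomic Frobenius versus $\delta$-structure comparison you mention is indeed the key input.
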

The topological cyclic homology $\mathrm{TC}(R)^\wedge_p$ also obtains an interpretation in terms of the \textit{syntomic complexes} $\mathbf Z_p(i)(R)$. Note however that in all these comparisons, the right-hand side bears the over-script $\widehat{(-)}$ to indicate that we have passed to the completion with respect to the Nygaard filtration.

\subsection{The goals and methods of this paper}
Our Main Theorem, as stated at the start of this introduction, is a far-reaching extension of the Bhatt-Morrow-Scholze result recalled above as Theorem \ref{Thm BMS2 iso}. It simultaneously extends it in the following three directions:
\begin{enumerate}
\item \label{1} It removes the Nygaard-completions, i.e.\ replaces $\widehat{\mathbbl\Delta}_R$ with $\mathbbl\Delta_R$.
\item \label{2} It relaxes the both the qrsp affineness assumptions, to be valid instead for any quasi-syntomic $p$-adic formal scheme.
\item \label{3} It recovers not only prismatic cohomology, but the prismatization stacks also.
\end{enumerate}

Our means of achieving all this at the same time is to provide \textit{even periodic enhancements} of the prismatization stacks. 
To elaborate on what that means, let us attempt a quick review of \textit{spectral algebraic geometry}, directing the reader to Lurie's authoritative work on the subject \cite{SAG} for more details in general, or to \cite{Even periodization} for the precise \textit{functor of points} setup we use in particular.

We understand \textit{spectral stacks}  to be certain kinds of functors $\mX : \CAlg\to\mathrm{Ani}$ from the $\i$-category of (not-necessarily connective) $\E$-ring spectra to anima. The role of \textit{affines} is played by the corepresentable functors $\Spec(A) : B\mapsto \Map_{\CAlg}(A, B).$ The constructions $\mX\mapsto \mX^\heart$ and $\mX\mapsto \sO(\mX)$ of the \textit{underlying classical stack} and the \textit{$\E$-ring of global functions} respectively, are defined for arbitrary spectral stacks by descent from affines, where they are in turn set to be
$$
\Spec(A)^\heart\,\simeq \,\Spec(\pi_0(A)), \qquad \sO(\Spec(A))\,\simeq \, A.
$$
A spectral stack is \textit{even periodic} if it is generated by affines of the form $\Spec(A)$ where $A$ is an even periodic $\E$-ring, i.e.\ $\pi_{2i}(A)\simeq \pi_0(A)$ and $\pi_{2i+1}(A)\simeq 0$ for all $i\in\Z$. For any spectral stack $\mX$, its \textit{even periodization} is the universal even periodic spectral stack $\mX^\mathrm{evp}$ together with a map of spectral stacks $\mX^\mathrm{evp}\to \mX$.

\subsection{Even periodic enhancements}
With this understanding of the language of spectral algebraic geometry, we say that an \textit{even periodic enhancement} of a classical stack $\mX_0$ is any even periodic spectral stack $\mX$ with the underlying classical stack equivalent to $\mX^\heart\,\simeq \, \mX_0$.

The search for and study of even periodic enhancements of various classical stacks has a rich and storied history in spectral algebraic geometry. As an essential step in the construction of topological modular forms, Goerss-Hopkins-Miller produced an even periodic enhancement of the classical moduli stack of elliptic curves. The celebrated Morava E-theory in chromatic homotopy theory provides precisely the even periodic enhancement of the classical Lubin-Tate deformation spaces of formal groups. Inspired by Lurie recasting both of the above examples explicitly in terms of spectral algebraic geometry in \cite{Elliptic 2}, Davies in \cite{Davies} and the author in \cite{ChromaticCartoon}
constructed even periodization of the classical moduli stacks of $p$-divisible groups and formal groups respectively. As we show in \cite{ChromaticFiltration}, a large chunk of chromatic homotopy theory may be interpreted as concerning the geometry of this latter even periodic enhancement.

\subsection{A sketch of the proof of the Main Theorem}
Our approach to the proof of the Main Theorem is to first define the even periodic enhancements in question for qrsp affines, and then extend via quasi-syntomic descent. For qrsp rings $R$, this amounts to \textit{Nygaard-decompleting} all the variants of topological cyclic homology. Following \cite{Deven}, we define the \textit{Frobenius-untwists}
$\mathrm{TP}^{(-1)}(R)$, $\mathrm{TC}^{-(-1)}(R)$ by a localization procedure (in the Appendix, we show how they can also be obtained \textit{by animation}, as explained to us by Devan Manam), and show that they do the job. Then we define the functors
$$
X\mapsto \pounds X, \, \text{\textlira} X, \,\$ X 
$$
from quasi-syntomic $p$-adic formal schemes to spectral stacks, respectively called the \textit{spherical Tate-loop space on $X$}, the \textit{spherical Nygaard-loop space on $X$}, and the \textit{spherical prismatization of $X$}, via quasi-syntomic descent. This reduces the definition to the case of a qrsp affine $X=\Spf(R)$, in which case we set
$$
\pounds\Spf(R)\,\simeq \, \Spf\big(\mathrm{TP}^{(-1)}(R)\big),\qquad \text{\textlira}\Spf(R)\,\simeq \, \Spf\big(\mathrm{TC}^{-(-1)}(R)\big)^\mathrm{evp},
$$
where $(-)^\mathrm{evp}$ denotes the even periodization of \cite{Even periodization}. Finally, we take $\$\Spf(R)$ to be the coequalizer of a certain pair of maps $\pounds\Spf(R)\rightrightarrows \text{\textlira}\Spf(R)$. 

With the so-defined even periodic stacks $X\mapsto \pounds X, \, \text{\textlira} X, \,\$ X$, the identification of their underlying classical stacks with the prismatization stacks $X^{\mathbbl\Delta}, \, X^{\mathcal N}, \, X^{\mathrm{Syn}}$ then follows from the explicit description  \eqref{Prism stacks via qsyn desc} of the latter.

\subsection{Consequences of the Main Theorem}
We conclude the  paper by showing how  some of the familiar structure on the prismatization stacks is encoded in the even periodic enhancements. For instance, the even homotopy sheaves 
$$
\pi_{2i}(\sO_{\pounds X}),\, \pi_{2i}(\sO_{\textit{\textlira} X}),\, \pi_{2i}(\sO_{\$ X})
$$
may be recognized for all $i\in\Z$ as the Breuil-Kisin twists
$$
\sO_{X^{\mathbbl\Delta}}\{i\},\,  \sO_{X^{\mathcal N}}\{i\},\, \sO_{X^{\mathrm{Syn}}}\{i\}.
$$
More spectacularly, we expand on the work of Manam in \cite{Deven} to show that the Quillen formal groups, which exist on the spectral stacks $X\mapsto \pounds X,  \text{\textlira} X,\$ X$ merely by the virtue of their even periodicity, induce upon the underlying classical stacks $X^{\mathbbl\Delta},  X^{\mathcal N},  X^{\mathrm{Syn}}$ precisely the Drinfeld formal group, exhibited on the prismatization stacks in \cite{Drinfeld FG}.

\subsection{The connection to loop spaces}
As the name and notation both suggest, we would like to encourage viewing the spherical Tate-loop and Nygaard-loop spaces $\pounds X, \text{\textlira}X$ as variants of the spherical free loop space
$$
\mathcal LX \,\simeq\, \Map_{\mathrm{SpStk}}(\T, X)\,\simeq\, X\times_{X\times_{\Spec(\mathbf S)}X}X,
$$ 
which geometrizes $\mathrm{THH}$. In \cite[Section 6.4]{Even periodization}, we showed how a coarse quotient of the even periodization of the spherical free loop space $(\mathcal LX)^\mathrm{evp}/\!/\T$ gives an even periodic enhancement of Nygaard-complete filtered prismatization $X^{\widehat{\mathcal N}}$. The two even periodic enhancements of filtered prismatization are connected by a canonical map
$$
(\mathcal LX)^\mathrm{evp}/\!/\T\to \text{\textlira X},
$$
which induces the Nygaard-completion map $X^{\widehat{\mathcal N}}\to X^{\mathcal N}$ on the underlying classical stacks.

\subsection{Relationship to other work}
This work has some non-trivial overlap with the upcoming independent work of Hahn-Devalapurkar-Raksit-Senger-Yuan, some accounts of which can be found in \cite{Equivariant aspects of de-completing cyclichomology}, \cite{Sanath's Thesis}, as well as \cite{Jeremy's talk} and \cite{Arpon's talk}. Their technical setup is slightly different, for instance eschewing the \textit{even periodic} stacks we consider for a different notion of \textit{even stacks}. Unlike ours, their discussion of spectrally-enhanced prismatization falls under the general heading of studying a circle-equivariant even filtration for cyclonic ring spectra. As such, it may be applied to a variety of objects outside the realm of $p$-adic geometry, including the sphere spectrum and complex bordism, which the technology of the present paper certainly can not accommodate. In short, we believe  it a fair assessment that relative to Hahn-Devalapurkar-Raksit-Senger-Yuan, the present paper is less ambitious, but correspondingly also simpler. We hope that this, as well as the objective desirability of a plurality of perspectives, justifies the coexistence of both papers.

\subsection*{Acknowledgments}
I owe thanks to several people: first to Akhil Mathew, for bringing prismatic cohomology to my attention on a visit to the University of Chicago in the late Fall of 2021. To Deven Manam for generously sharing his ideas about the Drinfeld formal group and Nygaard-decompletion with me.
To Jermey Hahn, for patiently explaining to me the results and scope of his upcoming joint work with Devalapurkar-Raksit-Senger-Yuan.
To David Ben-Zvi, David Gepner, and Sam Raskin for their support and many stimulating conversations about this project. 
And finally to Thomas Nikolaus, whose suggestion that  equivariance should not be necessary for spectral prismatization at a picnic in the Summer of 2025 was the unsuspected  missing cog that made the material of this paper slide into place.

\section{Frobenius-untwisted topological cyclic homology}\label{Section 1}
The quintessential first step is to remove the Nygaard-completions appearing in the Bhatt-Morrow-Scholze Theorem's result, summarized above as Theorem \ref{Thm BMS2 iso}. This necessitates a modification of the variants of topological cyclic homology, in which we follow Manam from \cite[Section 1]{Deven}.

In what follows, and throughout this paper, we let $\mathrm{CAlg}^\mathrm{qrsp}_{\mathbf Z_p}$  denote the category of qrsp (quasi-regular semiperfectoid) rings, always implicitly $p$-complete and classical, see \cite[Definition 4.20]{BMS2}.

\begin{theorem}[{\cite{Deven}}]\label{Theorem Frobenius untwist for TP}
There is a canonical functor $\mathrm{TP}^{(-1)}:\mathrm{CAlg}^\mathrm{qrsp}_{\mathbf Z_p}\to \CAlg^\mathrm{evp}_\mathrm{ad}$ with a natural transformation
$$\mathrm{TP}^{(-1)}(R)\to \mathrm{TP}(R)^\wedge_p$$
such that for any $i\in \mathbf Z$, its composite with the functor $\pi_{2i}:\CAlg^\mathrm{evp}\to\mathrm{CAlg}^\heart$ recovers the completion natural transformation
$$
\widehat{\mathbbl \Delta}_R\{i\}\to \mathbbl \Delta_R \{i\}.
$$
\end{theorem}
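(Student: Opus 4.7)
The plan is to follow Manam's approach from \cite[Section 1]{Deven}. The construction produces $\mathrm{TP}^{(-1)}(R)$ as the underlying $\E$-ring of a filtered refinement whose associated graded pieces are the \emph{uncompleted} Breuil-Kisin-twisted prisms $\mathbbl{\Delta}_R\{i\}[2i]$, rather than the Nygaard-completions $\widehat{\mathbbl{\Delta}}_R\{i\}[2i]$ appearing in the Bhatt-Morrow-Scholze motivic filtration on $\mathrm{TP}(R)^\wedge_p$. Because on qrsp rings the motivic filtration and the prismatic invariants are functorial and well-behaved, the entire construction can be carried out in filtered $\E$-rings, after which passage to the underlying object supplies the required even periodic adic $\E$-ring.

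The key steps, in order, are as follows. First, I would recall the BMS2 filtered refinements $\mathrm{Fil}^\star_\mathrm{mot}\mathrm{TC}^-(R)^\wedge_p$ and $\mathrm{Fil}^\star_\mathrm{mot}\mathrm{TP}(R)^\wedge_p$, whose $i$-th graded pieces are $\mathrm{Fil}^i_{\mathcal N}\widehat{\mathbbl{\Delta}}_R\{i\}[2i]$ and $\widehat{\mathbbl{\Delta}}_R\{i\}[2i]$ respectively, together with the filtered map $\mathrm{TC}^- \to \mathrm{TP}$ inverting a Bott-type class $v$. The essential observation is that on the subobject level the Nygaard filtration $\mathrm{Fil}^i_{\mathcal N}\mathbbl{\Delta}_R$ of the uncompleted prism coincides with $\mathrm{Fil}^i_{\mathcal N}\widehat{\mathbbl{\Delta}}_R$, since Nygaard-completion only affects the colimit $\mathrm{Fil}^{-\infty}$. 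Second, I would define
$$
\mathrm{Fil}^\star\mathrm{TP}^{(-1)}(R) \;:=\; \mathrm{Fil}^\star_\mathrm{mot}\mathrm{TC}^-(R)^\wedge_p \otimes_{\mathbf{S}[v]}\mathbf{S}[v^{\pm 1}]
$$
as a filtered $\E$-ring obtained by inverting $v$ in the filtered $\E$-algebra sense, crucially \emph{without} subsequently completing the resulting filtration. Third, I set $\mathrm{TP}^{(-1)}(R)$ to be the underlying $\E$-ring of this filtered object, equipped with the adic structure coming from the prismatic ideal of definition on the graded pieces. Fourth, the natural transformation $\mathrm{TP}^{(-1)}(R)\to \mathrm{TP}(R)^\wedge_p$ arises by completing the filtered refinement along the Nygaard filtration. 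Fifth, since the graded pieces are concentrated in even degrees, the associated spectral sequence degenerates and yields $\pi_{2i}\mathrm{TP}^{(-1)}(R) \simeq \mathbbl{\Delta}_R\{i\}$, with the canonical map to $\widehat{\mathbbl{\Delta}}_R\{i\}$ identified with the Nygaard-completion map.

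The main obstacle I expect is executing the inversion of $v$ as an $\E$-algebra operation on filtered objects rather than a bare module-level one, so that the result is really an $\E$-ring and functorial in $R$. This is handled by interpreting the formula above as a tensor product in $\mathrm{CAlg}(\mathrm{Sp}^\mathrm{fil})$, which automatically produces a filtered $\E$-ring because $\mathbf{S}[v^{\pm 1}]$ is an $\E$-algebra over $\mathbf{S}[v]$. A secondary delicate point is checking that the resulting underlying $\E$-ring genuinely lies in $\CAlg^\mathrm{evp}_\mathrm{ad}$: even periodicity follows from $v$ becoming a unit of degree $-2$ in the underlying object, while the adic structure is induced by the image of $p$ (equivalently, by the Breuil-Kisin ideal) on $\pi_0$.
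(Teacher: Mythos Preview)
Your proposal contains a genuine gap that prevents the construction from producing the intended object. Two related errors conspire here.

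First, your ``essential observation'' that the Nygaard filtration levels $\mathrm{Fil}^i_{\mathcal N}\mathbbl{\Delta}_R$ and $\mathrm{Fil}^i_{\mathcal N}\widehat{\mathbbl{\Delta}}_R$ coincide is false. Nygaard completion is completion \emph{with respect to} this filtration, so it changes every filtered piece, not just the colimit; indeed Proposition~\ref{Prop homotopy groups of untiwsted TC^-} of the paper uses precisely that $\mathrm{Fil}^i_{\mathcal N}\mathbbl{\Delta}_R\{i\}$ is the pullback of $\mathrm{Fil}^i_{\mathcal N}\widehat{\mathbbl{\Delta}}_R\{i\}$ along $\mathbbl{\Delta}_R\{i\}\to\widehat{\mathbbl{\Delta}}_R\{i\}$, which would be vacuous under your claim. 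Consequently, the associated graded pieces of any filtered object built from $\mathrm{fil}^\star_\mathrm{mot}\mathrm{TC}^-(R)^\wedge_p$ by a localization will still involve the Nygaard-completed $\widehat{\mathbbl{\Delta}}_R$, never the uncompleted $\mathbbl{\Delta}_R$.

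Second, and more fundamentally, you are inverting the wrong element. The class $v$ you specify, the degree $-2$ element whose inversion realizes $\mathrm{TC}^-\to\mathrm{TP}$, is the Tate generator $t$. Inverting $t$ in any sense (filtered or not, completed or not) yields $\mathrm{TP}(R)^\wedge_p$, not a decompleted variant. The paper's construction instead inverts a multiplicative set $\mathscr S_R\subseteq\pi_*(\mathrm{TC}^-(R)^\wedge_p)$ defined via the \emph{divided Frobenius}: one localizes at those elements that become units in $\bigoplus_i\mathbbl{\Delta}_R\{i\}$ under the factored map $\widetilde{\varphi}$, and then $(p,I)$-completes. After a choice of perfectoid cover this amounts (Remark~\ref{Remark 1.3}) to inverting the degree $+2$ class $u$ satisfying $ut=\phi^{-1}(d)$, whose associated colimit along multiplication-by-$\phi^{-1}(d)$ is what actually recovers $\mathbbl{\Delta}_R$ from the Nygaard-completed pieces. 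The Frobenius enters the construction essentially; your proposal never invokes it, and therefore cannot decomplete.
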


Since this construction is playing such an outsized role in the present paper, let us repeat here its proof from \cite{Deven}, emphasizing the functoriality of the construction.

\begin{proof}[Proof]
By the theory of prismatic cohomology, e.g.\ \cite[Remark 5.7.10]{BLa}, the divided Frobenius $\varphi$ admits a factorization through the Nygaard-completion as
\begin{equation}\label{Factoring divided Frobenius on homotopy}
\xymatrix{
\mathrm{Fil}_\mathcal N^i\,\mathbbl \Delta_R\{i\}\ar[dr]\ar[rr]^{\varphi\{i\}} & & \mathbbl\Delta_R\{i\},\\
& \mathrm{Fil}_\mathcal N^i\,\widehat{\mathbbl \Delta}_R\{i\}\ar[ur]_{\widetilde{\varphi}\{i\}} &
}
\end{equation}
naturally in terms of the qrsp ring $R$ and $i\in \mathbf Z$. If we let
$$
\mathscr S_R\, \subseteq\, \bigoplus_{i\in \mathbf Z}\mathrm{Fil}_\mathcal N^i\,\widehat{\mathbbl \Delta}_R\{i\} \,\cong \,\pi_*(\mathrm{TC}^-(R)^\wedge_p)
$$
denote the graded multiplicative subset of all those elements which become invertible in $\bigoplus_{i\in \mathbf Z}\mathbbl\Delta_R\{i\}$ after applying $\widetilde{\varphi}$, the desired $\E$-ring is obtained as the localization
$$
\mathrm{TP}^{(-1)}(R) \,:=\,\big(\mathrm{TC}^-(R)^\wedge_p[\mathscr S_R^{-1}]\big)^\wedge_{(p, I)},
$$
completed with respect to the ideal $(p, I)$, where $I\subseteq\mathbbl\Delta_R$ is the prismatic ideal. It then follows by design that the $\E$-ring $\mathrm{TP}^{(-1)}(R)$ is even and has homotopy groups
$$
\pi_{2i}\big(\mathrm{TP}^{(-1)}(R)\big)\simeq \mathbbl \Delta_R\{i\},
$$
For a proof that this is indeed a graded localization, see \cite[Proposition 1.2]{Deven}.
Since the Tate-Frobenius of Nikolaus-Scholze  $\varphi_{\mathrm{NS}}$ induces the (Nygaard-completed) divided Frobenius $\widehat\varphi$
under the identifications of Theorem \ref{Thm BMS2 iso},
there is by design also a  commutative diagram of even $\E$-rings
\begin{equation}\label{Factoring the Tate Frobenius}
\xymatrix{
\mathrm{TC}^-(R)^\wedge_p\ar[dr]_{{\varphi}^{(-1)}}\ar[rr]^{\varphi_\mathrm{NS}} & & \mathrm{TP}(R)^\wedge_p,\\
& \mathrm{TP}^{(-1)}(R)\ar[ur] &
}
\end{equation}
which induces upon the passage to $\pi_{2i}$ a factorization of the (Nygaard-completed) divided Frobenius $\widehat{\varphi}$ through the above-described lifted Frobenius $\widehat{\varphi}$.
$$
\xymatrix{
\mathrm{Fil}_\mathcal N^i\,\widehat{\mathbbl \Delta}_R\{i\}\ar[dr]_{\widetilde{\varphi}\{i\}}\ar[rr]^{\widehat{\varphi}\{i\}} & & \widehat{\mathbbl\Delta}_R\{i\}.\\
& {\mathbbl \Delta}_R\{i\}\ar[ur], &
}
$$
Examining the respective lower right maps in these two diagrams gives the last claim in the statement of the Theorem.

It remains to verify the functoriality and naturality claims concerning $\mathrm{TP}^{(-1)}$. The naturality will follow from the naturality of localization, so it suffices to show  that the construction $R\mapsto \mathrm{TP}^{(-1)}(R)$ extends to a functor $\mathrm{CAlg}^\mathrm{qrsp}_{\mathbf Z_p}\to\CAlg_\mathrm{ad}$. For this purpose, let $\CAlg^{++}$ (resp.\ $\CAlg^+$) denote the $\i$-category consisting of triples $(A, \mathscr S, J)$ (resp.\ pairs $(A, J)$) of an $\E$-ring $A$, a multiplicative subset $\mathscr S\subseteq \pi_*(A)$, and a finitely generated ideal $J\subseteq \pi_0(A)$, and evident structure-preserving maps between them. We wish to construct the functor $\mathrm{TP}^{(-1)}$ as the composite
$$
\mathrm{CAlg}^\mathrm{qrsp}_{\mathbf Z_p}\xrightarrow{\mathrm{TC}^-_{++}} \CAlg^{++}\xrightarrow{(-)[\mathscr S^{-1}]} \CAlg^+\xrightarrow{(-)^\wedge_J} \CAlg_\mathrm{ad},
$$
where the second and third functors, i.e.\ localization and completion respectively, are clear. The first functor $\mathrm{TC}^-_{++} :\mathrm{CAlg}^\mathrm{qrsp}_{\mathbf Z_p}\to\CAlg^{++}$ is given by
$$
R\mapsto (\mathrm{TC}^-(R), \mathscr S_R, (p, I)).
$$
Since $\mathrm{TC}^-$ is evidently functorial, whereas $\mathscr S_R$ and $(p, I)$ are $\pi_*$-level data, this is no longer a homotopy coherence question, but rather a simple observation about prismatic cohomology. Namely, let $R\to R'$ be a map of qrsp rings, inducing maps $\mathbbl\Delta_R\to \mathbbl \Delta_{R'}$, as well as between the Nygaard filtrations and Nygaard-completions. From the naturality of the divided Frobenii, it follows that the diagram
$$
\xymatrix{
\mathrm{Fil}_\mathcal N^i\,\widehat{\mathbbl \Delta}_R\{i\}\ar[d]\ar[rr]^{\widetilde{\varphi}\{i\} } & &\mathbbl{\Delta}_R\{i\}\ar[d]\\
\mathrm{Fil}_\mathcal N^i\,\widehat{\mathbbl \Delta}_{R'}\{i\}\ar[rr]^{\widetilde{\varphi}\{i\} } & &\mathbbl{\Delta}_{R'}\{i\}
}
$$
commutes, from which it is clear that both the prismatic ideals are preserved by the horizontal maps, as well as that the left horizontal map maps $\mathscr S_R$ to $\mathscr S_{R'}$, since invertible elements are preserved under any ring map. 
\end{proof}

\begin{remark}
Theorem \ref{Theorem Frobenius untwist for TP} guarantees that the $\E$-ring $\mathrm{TP}^{(-1)}(R)$ comes equipped with a canonical adic structure for every qrsp ring $R$. Under the isomorphism $\pi_0\left(\mathrm{TP}^{(-1)}(R)\right)\simeq \mathbbl \Delta_R$, this adic topology may be identified as having $(p, I)$ for its ideal of definition, where $I\subseteq \mathbbl\Delta_R$ is the prismatic ideal.
\end{remark}

\begin{remark}\label{Remark 1.3}
Let us expand on how the above construction is ``Frobenius-untwisted". 
Let $R$ be a qrsp ring, and $P$ a perfectoid ring together with a  surjection $P\to R$. This allows us to  identify absolute prismatic complex with the relative prismatic complex $\mathbbl\Delta_R \simeq \mathbbl\Delta_{R/P}$.We also chose a generator $d$ for the prismatic ideal of the perfect prism $\mathbbl\Delta_P=\mathbf A_\mathrm{inf}(P) =\mathrm W(P^\flat)$, and therefore also of $\mathbbl\Delta_R$. This in particular specifies a trivialization of the Breuil-Kisin twists, which we therefore suppress.

To keep   track of the $\mathbf A_\mathrm{inf}(P)$-algebra structure, the $\delta$-ring Frobenius map must be written as $\phi:\tfrac 1{d^i}\mathrm{Fil}^i_{\mathcal N}\, \mathbbl\Delta_{R}^{(1)}\to \mathbbl\Delta_{R}$, i.e.\ with a Frobenius twist over $\mathbbl\Delta_P$. In terms of subsets of the localization $\mathbbl\Delta_R[\tfrac 1 d]$, the relative Frobenius is further compatible with the sequence of inclusions
\begin{equation}\label{diagram F}
\mathbbl\Delta^{(1)}_{R}\,=\,\mathrm{Fil}^0_{\mathcal N}\, \mathbbl\Delta^{(1)}_{R} \,\subseteq\, \tfrac 1d\mathrm{Fil}^1_{\mathcal N}\, \mathbbl\Delta^{(1)}_{R}\,\subseteq\, \tfrac 1{d^2}\mathrm{Fil}^2_{\mathcal N}\, \mathbbl\Delta_{R}^{(1)}\,\subseteq\,\cdots \xrightarrow{\phi}\mathbbl\Delta_R,
\end{equation}
and by \cite[Corollary 5.2.16]{BLa} this directed system induces an equivalence
\[
\big(\varinjlim_i \tfrac 1{d^{i}}\mathrm{Fil}_{\mathcal N}^{i}\mathbbl\Delta_{R}^{(1)}\big)^\wedge_{(p, d)} \,\simeq \mathbbl\Delta_{R}.
\]
The divided Frobenius $\varphi$ is given in terms of the prismatic $\delta$-ring Frobenius $\phi$ on the $i$-th piece of the Nygaard filtration by $\varphi =\frac \phi{d^i}$. By using the divided Frobenius, diagram \ref{diagram F} may be rewritten as
\[
\mathbbl\Delta^{(1)}_{R}\,=\,\mathrm{Fil}^0_{\mathcal N}\, \mathbbl\Delta^{(1)}_{R} \,\xrightarrow{d}\, \mathrm{Fil}^1_{\mathcal N}\, \mathbbl\Delta^{(1)}_{R}\,\xrightarrow{d}\,\mathrm{Fil}^2_{\mathcal N}\, \mathbbl\Delta_{R}^{(1)}\,\xrightarrow{d}\,\cdots \xrightarrow{\varphi}\mathbbl\Delta_R.
\]
The observation \eqref{Factoring divided Frobenius on homotopy} that the divided Frobenius always factors canonically through the Nygaard-completion, we obtain yet another diagram
\[
\widehat{\mathbbl\Delta}^{(1)}_{R}\,=\,\mathrm{Fil}^0_{\mathcal N}\, \widehat{\mathbbl\Delta}^{(1)}_{R} \,\xrightarrow{d}\, \mathrm{Fil}^1_{\mathcal N}\, \widehat{\mathbbl\Delta}^{(1)}_{R}\,\xrightarrow{d}\,\mathrm{Fil}^2_{\mathcal N}\, \widehat{\mathbbl\Delta}_{R}^{(1)}\,\xrightarrow{d}\,\cdots \xrightarrow{\widetilde{\varphi}}{\mathbbl\Delta}_R,
\]
which turns out to be a colimit diagram in the category of $(p, d)$-complete modules. As such, the completed colimit
\begin{equation}\label{diagram d varphi}
\varinjlim\big(\widehat{\mathbbl\Delta}_{R}\, \xrightarrow{\phi^{-1}(d)}\, \mathrm{Fil}^1_{\mathcal N}\, \widehat{\mathbbl\Delta}_{R}\,\xrightarrow{\phi^{-1}(d)}\,\mathrm{Fil}^2_{\mathcal N}\, \widehat{\mathbbl\Delta}_{R}\,\xrightarrow{\phi^{-1}(d)}\,\mathrm{Fil}^3_{\mathcal N}\, \widehat{\mathbbl\Delta}_{R}\,\xrightarrow{\phi^{-1}(d)}\,\cdots \big)^\wedge_{(p, d)}
\end{equation}
exhibits through the divided Frobenius map $\widetilde{\varphi}$ a ring $\mathbbl\Delta_R^{(-1)}$, which simultaneously Nygaard-decompletes and Frobenius-untwists the prismatic complex $\mathbbl\Delta_R$. Under the identifications of Bhatt-Morrow-Scholze, recalled above as Theorem \ref{Thm BMS2 iso},
or more precisely by \cite[Proposition 6.2]{BMS2}, the $p$-completed negative topological cyclic homology of the perfectoid ring $P$ has homotopy groups given by\footnote{The formula appearing in \cite[Proposition 6.2]{BMS2} technically lists the quotiented relation (when rewritten with our notational choices) as $ut-d$. The reason for this inconsistency is that \cite{BMS2} is in general imprecise about keeping track of Frobenius or Breuil-Kisin twists -- a harmless laxity when working over a perfectoid base. But being more precise, e.g.\ \cite[Example 5.5.6]{Bhatt F-gauges}, the Nygaard filtration on $\mathbbl\Delta_R$ for a qrsp ring $R$ does indeed feature $ut-\phi^{-1}(d).$}
$$
\pi_*\big(  \mathrm{TC}^-(P)^\wedge_p\big)\,\simeq \, \mathbf A_\mathrm{inf}(P)[u, t]/(ut-\phi^{-1}(d))
$$
where $u$ is in homotopy degree $2$ and $t$ in homotopy degree $-2$.
We see by \cite[Proposition 6.3]{BMS2} that the multiplication by $u$ induces the multiplication by $\phi^{-1}(d)$ on homotopy groups in light of the equivalences of Theorem \ref{Thm BMS2 iso}. It follows that the colimit \eqref{diagram d varphi} is induced on $\pi_0$ by the telescopic limit formula for the $\E$-ring localization\footnote{The approach of Manam from \cite{Deven}, which we have follows in the proof of Theorem \ref{Theorem Frobenius untwist for TP}, amounts ostensibly to phrasing this same construction in a way which is evidently independent of the choice of $P$. But in the form discussed in this Remark -- which can also be found in \cite[proof of Proposition 1.2]{Deven} -- the author had first learned of this construction from Akhil Mathew.}
\begin{equation}\label{Formula invert u}
\mathrm{TC}^{-1}(R)^\wedge_p[u^{-1}]^\wedge_{(p, d)} \,\simeq \, \mathrm{TP}^{(-1)}(R).
\end{equation}
The latter identification comes from noting that in the case in question, the graded localizing subset $\mathscr S_R\subseteq\pi_0(\mathrm{TC}^{-1}(R))$ from the proof of Theorem \ref{Theorem Frobenius untwist for TP} is generated by the element $u$.
The homotopy groups of the  $\E$-ring $\mathrm{TP}^{(-1)}(R)$ therefore indeed compute precisely the colimit \eqref{diagram d varphi}, which is to say, the Frobenius untwist. But since the choice of where the Frobenius twists start counting from is arbitrary -- after all, we are working over the perfect prism $\mathbbl\Delta_P\simeq \mathbf A_\mathrm{inf}(P)$, whose Frobenius is an isomorphism, -- we elect to re-index so that the Frobenius twists disappear from notation e.g.\ in the statement of Theorem \ref{Theorem Frobenius untwist for TP}.
\end{remark}

\begin{remark}
Continuing with the setting and notation of Remark \ref{Remark 1.3}, note that the element $t\in \pi_{-2}\big(\mathrm{TC}^-(R)^\wedge_p\big)$ is the standard ``fundamental class of the circle", appearing since we took homotopy $\T$-fixed-points. Inverting this class is one explicit description of the Tate construction for a $\T$-action. That is to say, there is a canonical equivalence of $\E$-rings
$$
\mathrm{TC}^{-1}(R)^\wedge_p[t^{-1}]^\wedge_{(p, d)} \,\simeq \, \mathrm{TP}(R)^\wedge_p.
$$
Contrasting this with \eqref{Formula invert u}, we see that the choice of which generator of the homotopy ring to invert matters a fair deal in this case.
\end{remark}

Using the above-defined Frobenius-untwists $\mathrm{TP}^{(-1)}$ of (the $p$-completion of) the topological periodic cyclic homology $\mathrm{TP}$, we can similarly define a Frobenius-untwist (as well as Nygaard-decompleted) version  $\mathrm{TC}^{-(-1)}$ of the topological negative cylic homology $\mathrm{TC}^-$. The author is informed by Deven Manam that this construction was independently suggested by Arpon Raksit.

\begin{cons}\label{Cons untwisted TC^-}
Let $R$ be a qrsp ring. Let
$$
\xymatrix{
\mathrm{TC}^{-(-1)}(R) \ar[rr]^{\mathrm{can}}\ar[d] & &\mathrm{TP}^{(-1)}(R)\ar[d] \\
\mathrm{TC}^-(R)^\wedge_p\ar[rr]^{\mathrm{can}}& &\mathrm{TP}(R)^\wedge_p
}
$$
be a pullback square in the $\i$-category of adic $\E$-rings, where the lower horizontal arrow is the canonical map and the right vertical arrow is the natural transformation from Theorem \ref{Theorem Frobenius untwist for TP}. It follows from the functoriality therein that this defines a functor
$$
\mathrm{TC}^{-(-1)}:\mathrm{CAlg}^\mathrm{qrsp}_{\mathbf Z_p}\to \CAlg_\mathrm{ad},
$$
such that the above pullback square is natural in $R$.
\end{cons}

\begin{prop}\label{Prop homotopy groups of untiwsted TC^-}
Let $R$ be a qrsp ring. Then the $\E$-ring $\mathrm{TC}^{-(-1)}(R)$ is even, and for any $i\in \mathbf Z$ there is a canonical isomomorphisms
$$
\pi_{2i}\big(\mathrm{TC}^{-(-1)}(R)\big)\,\simeq\, \mathrm{Fil}^i_{\mathcal N}\, \mathbbl \Delta_R\{i\}.
$$
\end{prop}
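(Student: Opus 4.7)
The plan is to extract both claims from the Mayer-Vietoris long exact sequence in homotopy groups associated to the defining pullback square of Construction \ref{Cons untwisted TC^-}. Since the pullback in $\CAlg_{\mathrm{ad}}$ is computed on underlying $\E$-rings, and since by Theorems \ref{Theorem Frobenius untwist for TP} and \ref{Thm BMS2 iso} the three other corners of the square are already known to be even, the Mayer-Vietoris sequence in odd total degree will collapse to the assertion that the canonical difference map
$$
\pi_{2i}\big(\mathrm{TP}^{(-1)}(R)\big)\oplus \pi_{2i}\big(\mathrm{TC}^-(R)^\wedge_p\big)\longrightarrow \pi_{2i}\big(\mathrm{TP}(R)^\wedge_p\big)
$$
is surjective, from which the evenness of $\mathrm{TC}^{-(-1)}(R)$ will follow; in even total degree, $\pi_{2i}\big(\mathrm{TC}^{-(-1)}(R)\big)$ will appear as the kernel of the same map.

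Under the identifications of Theorems \ref{Theorem Frobenius untwist for TP} and \ref{Thm BMS2 iso}, and suppressing the invertible Breuil-Kisin twist $\{i\}$ throughout the argument, this Mayer-Vietoris map is the difference of the Nygaard-completion $\mathbbl\Delta_R\to\widehat{\mathbbl\Delta}_R$ and the filtration inclusion $\mathrm{Fil}^i_{\mathcal N}\widehat{\mathbbl\Delta}_R\hookrightarrow\widehat{\mathbbl\Delta}_R$. Its kernel is by construction the fibre product $\mathbbl\Delta_R\times_{\widehat{\mathbbl\Delta}_R}\mathrm{Fil}^i_{\mathcal N}\widehat{\mathbbl\Delta}_R$, which is exactly the standard meaning of $\mathrm{Fil}^i_{\mathcal N}\mathbbl\Delta_R$ -- the preimage in the uncompleted prismatic cohomology of the Nygaard filtration on the completion. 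Reinstating the Breuil-Kisin twist will then give the claimed formula.

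The one step requiring an actual argument rather than mere bookkeeping is the surjectivity of the difference map above, and this is where I expect the main (if modest) difficulty to lie. I would deduce it from the tautological isomorphism $\widehat{\mathbbl\Delta}_R/\mathrm{Fil}^i_{\mathcal N}\widehat{\mathbbl\Delta}_R\simeq \mathbbl\Delta_R/\mathrm{Fil}^i_{\mathcal N}\mathbbl\Delta_R$, which holds by the very definition of the Nygaard completion as $\widehat{\mathbbl\Delta}_R:=\lim_j \mathbbl\Delta_R/\mathrm{Fil}^j_{\mathcal N}\mathbbl\Delta_R$. This implies that the image of the Nygaard-completion map already surjects onto $\widehat{\mathbbl\Delta}_R$ modulo $\mathrm{Fil}^i_{\mathcal N}\widehat{\mathbbl\Delta}_R$, so together with the inclusion $\mathrm{Fil}^i_{\mathcal N}\widehat{\mathbbl\Delta}_R\hookrightarrow\widehat{\mathbbl\Delta}_R$ it generates all of $\widehat{\mathbbl\Delta}_R$ -- which is precisely the required surjectivity.
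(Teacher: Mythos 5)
Your proposal is correct and follows essentially the same route as the paper: both reduce the statement to the assertion that the square relating $\mathrm{Fil}^i_{\mathcal N}\,\mathbbl\Delta_R\{i\}$, $\mathbbl\Delta_R\{i\}$, $\mathrm{Fil}^i_{\mathcal N}\,\widehat{\mathbbl\Delta}_R\{i\}$, and $\widehat{\mathbbl\Delta}_R\{i\}$ is a pullback, which the paper takes as the definition of the Nygaard filtration on the decompleted prismatic cohomology (citing [AKN, Construction~7.11]). The only difference is one of detail: you unpack the Mayer--Vietoris sequence and verify the surjectivity needed for evenness directly via $\widehat{\mathbbl\Delta}_R/\mathrm{Fil}^i_{\mathcal N}\simeq\mathbbl\Delta_R/\mathrm{Fil}^i_{\mathcal N}$, whereas the paper absorbs both points into the cited pullback property.
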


\begin{proof}
In light of Construction \ref{Cons untwisted TC^-}, this amounts to the assertion that the canonical square
$$
\xymatrix{
\mathrm{Fil}^i_{\mathcal N}\, \mathbbl \Delta_R\{i\} \ar[d] \ar[rr]^{\subseteq}& &\mathbbl\Delta_R\{i\}\ar[d] \\
\mathrm{Fil}^i_{\mathcal N}\, \widehat{\mathbbl \Delta}_R\{i\}\ar[rr]^{\subseteq}& & \widehat{\mathbbl \Delta}_R\{i\}
}
$$
is a pullback for all $i\in \mathbf Z$. But this can even be taken as the definition of the Nygaard filtration on $\mathbbl\Delta_R$, see \cite[Construction 7.11]{Antieau-Krause-Nikolaus}.
\end{proof}

\begin{cons}\label{Cons of div Frob}
The natural transformation $\varphi:\mathrm{TC}^{-(-1)}\to \mathrm{TP}^{(-1)}$, given for every qrsp ring $R$ by the composite
$$
\mathrm{TC}^{-(-1)}(R)\to \mathrm{TC}^-(R)^\wedge_p\xrightarrow{{\varphi}^{(-1)}}\mathrm{TP}^{(-1)}(R),
$$
where the second map is the first part of the factorization \eqref{Factoring the Tate Frobenius} of the Nikolaus-Scholze Tate-Frobenius $\varphi_\mathrm{NS} : \mathrm{TC}^-(R)^\wedge_p\to \mathrm{TP}(R)^\wedge_p$.
\end{cons}

\begin{prop}\label{Prop Frob on homotopy}
Let $R$ be a qrsp ring.
The $\E$-ring map
$$
\varphi:\mathrm{TC}^{-(-1)}(R)\to\mathrm{TP}^{(-1)}(R)
$$
of Construction \ref{Cons of div Frob} induces for every $i\in \mathbf Z$ upon the passage to the homotopy groups $\pi_{2i}$ the (Breuil-Kisin-twisted) divided Frobenius morphism
$$
\varphi\{i\} : \mathrm{Fil}_\mathcal N^i\,\mathbbl \Delta_R\{i\}\to \mathbbl \Delta_R\{i\}.
$$
\end{prop}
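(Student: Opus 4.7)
The plan is to unwind the definition of $\varphi$ from Construction \ref{Cons of div Frob} and chase through the identifications already established in Theorem \ref{Theorem Frobenius untwist for TP} and Proposition \ref{Prop homotopy groups of untiwsted TC^-}. Since $\varphi$ is by construction the composite
$$
\mathrm{TC}^{-(-1)}(R)\xrightarrow{\mathrm{pr}}\mathrm{TC}^-(R)^\wedge_p\xrightarrow{\varphi^{(-1)}}\mathrm{TP}^{(-1)}(R),
$$
it suffices to identify the effect of each factor on $\pi_{2i}$ and take their composite.

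First I would observe that the left vertical map $\mathrm{TC}^{-(-1)}(R)\to \mathrm{TC}^-(R)^\wedge_p$ in the defining pullback square of Construction \ref{Cons untwisted TC^-} induces on $\pi_{2i}$ precisely the left vertical map in the pullback square used in the proof of Proposition \ref{Prop homotopy groups of untiwsted TC^-}, namely the Nygaard-completion inclusion
$$
\mathrm{Fil}^i_{\mathcal N}\,\mathbbl\Delta_R\{i\}\hookrightarrow \mathrm{Fil}^i_{\mathcal N}\,\widehat{\mathbbl\Delta}_R\{i\}.
$$
Next, by the commutative diagram \eqref{Factoring the Tate Frobenius} in the proof of Theorem \ref{Theorem Frobenius untwist for TP}, the map $\varphi^{(-1)}$ induces on $\pi_{2i}$ the map ${\widetilde{\varphi}}\{i\} : \mathrm{Fil}^i_{\mathcal N}\,\widehat{\mathbbl\Delta}_R\{i\}\to \mathbbl\Delta_R\{i\}$; indeed, that is precisely the content of the final claim of Theorem \ref{Theorem Frobenius untwist for TP}, as extracted from comparing the lower-right edges of the triangles \eqref{Factoring divided Frobenius on homotopy} and \eqref{Factoring the Tate Frobenius}.

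Putting the two halves together, the map $\pi_{2i}(\varphi) : \mathrm{Fil}^i_{\mathcal N}\,\mathbbl\Delta_R\{i\}\to \mathbbl\Delta_R\{i\}$ is the composite of the Nygaard-completion with $\widetilde{\varphi}\{i\}$. But this composite is exactly the factorization \eqref{Factoring divided Frobenius on homotopy} of the divided Frobenius through the Nygaard-completion, and so equals the Breuil-Kisin-twisted divided Frobenius $\varphi\{i\}$, as claimed.

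I do not expect any real obstacle: all necessary compatibilities are already packaged in the preceding Theorem and Proposition, and the only thing to verify is that the pullback defining $\mathrm{TC}^{-(-1)}(R)$ descends on homotopy to the pullback diagram of Proposition \ref{Prop homotopy groups of untiwsted TC^-}, so that the $\pi_{2i}$ of the canonical map is visibly the inclusion of the Nygaard piece. The potentially subtle point is simply tracking the Breuil-Kisin twists consistently, but since the identifications of Theorem \ref{Thm BMS2 iso} and Theorem \ref{Theorem Frobenius untwist for TP} already carry the $\{i\}$ along, no further bookkeeping is needed.
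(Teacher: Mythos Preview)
Your proof is correct and follows essentially the same approach as the paper's own proof: the paper simply asserts that the commutative triangle defining $\varphi$ (with $\mathrm{TC}^-(R)^\wedge_p$ at the bottom vertex) induces on $\pi_{2i}$ precisely the triangle \eqref{Factoring divided Frobenius on homotopy}, which is exactly what you verify edge by edge. Your version is a fully spelled-out account of the paper's one-line argument, and the extra care you take to justify why the projection from the pullback induces the Nygaard-completion inclusion is a welcome clarification.
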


\begin{proof}
By construction, the commutative triangle
$$
\xymatrix{
\mathrm{TC}^{-(-1)}(R)\ar[dr]\ar[rr]^{\varphi} & & \mathrm{TP}^{(-1)}(R),\\
&\mathrm{TC}^-(R)^\wedge_p\ar[ur]_{\varphi^{(-1)}} &
}
$$
induces upon the passage to $\pi_{2i}$ the commutative triangle \eqref{Factoring divided Frobenius on homotopy} from the proof of Theorem \ref{Theorem Frobenius untwist for TP}, proving the desired claim.
\end{proof}

One might at this point expect to define a Frobenius-untwisted version of $\mathrm{TC}$, topological cyclic homology. But as it turns out, this yields nothing new.

\begin{prop}\label{Prop TC don't care}
For any qrsp ring $R$, there is a canonical equivalence of $\E$-rings
$$
\mathrm{TC}(R)^\wedge_p\,\simeq \, \mathrm{fib}\big(\mathrm{TC}^{-(-1)}(R)\xrightarrow{\varphi\,-\,\mathrm{can}}\mathrm{TP}^{(-1)}(R)\big).
$$
\end{prop}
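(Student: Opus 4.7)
The plan is to reduce the claim to the classical ($p$-typical) Nikolaus--Scholze presentation
$$
\mathrm{TC}(R)^\wedge_p \,\simeq\, \mathrm{fib}\big(\mathrm{TC}^-(R)^\wedge_p \xrightarrow{\varphi_\mathrm{NS} - \mathrm{can}} \mathrm{TP}(R)^\wedge_p\big),
$$
so that the statement becomes the assertion that the canonical comparison map
$$
\mathrm{fib}\big(\mathrm{TC}^{-(-1)}(R)\xrightarrow{\varphi-\mathrm{can}}\mathrm{TP}^{(-1)}(R)\big)\,\longrightarrow\, \mathrm{fib}\big(\mathrm{TC}^-(R)^\wedge_p\xrightarrow{\varphi_\mathrm{NS}-\mathrm{can}}\mathrm{TP}(R)^\wedge_p\big)
$$
is an equivalence of spectra. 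The existence of this comparison map combines Construction \ref{Cons untwisted TC^-} (which ensures the Nygaard-decompletion maps intertwine the two versions of $\mathrm{can}$) with Construction \ref{Cons of div Frob} (which does the same for $\varphi$ and $\varphi_\mathrm{NS}$).

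To establish that the comparison map is an equivalence, I would argue that the commutative square
$$
\xymatrix{
\mathrm{TC}^{-(-1)}(R)\ar[d]\ar[rr]^{\varphi-\mathrm{can}} & & \mathrm{TP}^{(-1)}(R)\ar[d] \\
\mathrm{TC}^-(R)^\wedge_p\ar[rr]^{\varphi_\mathrm{NS}-\mathrm{can}} & & \mathrm{TP}(R)^\wedge_p
}
$$
of spectra is a pullback, which suffices since pullback squares induce equivalences on fibers of parallel arrows. A square of spectra is a pullback iff the induced map on the fibers of its two vertical arrows is an equivalence. The defining pullback of Construction \ref{Cons untwisted TC^-} identifies the fiber of the left vertical arrow with $\mathrm{fib}(\mathrm{TP}^{(-1)}(R)\to \mathrm{TP}(R)^\wedge_p)$, which is tautologically also the fiber of the right vertical arrow.

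It remains to compute the induced map on fibers. The canonical map $\mathrm{can}:\mathrm{TC}^{-(-1)}(R)\to \mathrm{TP}^{(-1)}(R)$ is, per Construction \ref{Cons untwisted TC^-}, the projection of the defining pullback onto its second factor, and so restricts to the identity on fibers under the aforementioned identification. The Frobenius $\varphi:\mathrm{TC}^{-(-1)}(R)\to \mathrm{TP}^{(-1)}(R)$, on the other hand, factors through $\mathrm{TC}^-(R)^\wedge_p$ by Construction \ref{Cons of div Frob}, and therefore vanishes on the fiber of $\mathrm{TC}^{-(-1)}(R)\to\mathrm{TC}^-(R)^\wedge_p$. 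The difference $\varphi-\mathrm{can}$ thus induces $-\mathrm{id}$ on fibers, which is manifestly an equivalence. I do not anticipate any serious obstacle beyond this diagram chase --- the whole argument is an instance of the elementary principle that the fiber of $\varphi-\mathrm{can}$ is insensitive to base change along a map through which $\varphi$ already factors and along which $\mathrm{can}$ is a pullback projection.
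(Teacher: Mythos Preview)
Your proof is correct and uses the same two ingredients as the paper: the defining pullback square of Construction \ref{Cons untwisted TC^-} and the factorization of $\varphi$ through $\mathrm{TC}^-(R)^\wedge_p$ from Construction \ref{Cons of div Frob}. The paper packages the argument by rewriting both fibers as equalizers-realized-as-pullbacks and then stacking Cartesian squares, whereas you work directly with the single square and compute the induced map on vertical fibers; these are equivalent manipulations in a stable $\infty$-category, and your version is arguably the more streamlined of the two.
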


\begin{proof}
Let us temporarily denote the right-hand side by $\mathrm{TC}^{(-1)}(R)$. Expressing the fiber or a difference as an equalizer and the latter as a pullback square, we have the commutative diagram
$$
\xymatrix{
\mathrm{TC}^{(-1)}(R)\ar[rr] \ar[d]^{} & & \mathrm{TP}^{(-1)}(R)\ar[d]^{\Delta} \\ \mathrm{TC}^{-(-1)}(R)^\wedge_p\ar[rr]^{(\mathrm{can}, \,\varphi)\qquad} \ar[d]^{} & & \mathrm{TP}^{(-1)}(R)\times \mathrm{TP}^{(-1)}(R) \ar[d]^{}\\
\mathrm{TC}^-(R){}^\wedge_p\ar[rr]^{(\mathrm{can},\, \varphi^{(-1)})\qquad} & & \mathrm{TP}(R)^\wedge_p\times \mathrm{TP}^{(-1)}(R)
}
$$
in which all the squares are Cartesian -- for the lower square, this follows from Construction \ref{Cons untwisted TC^-}. In particular, we may compute $\mathrm{TC}^{(-1)}(R)$ as the pullback if the total square of the above diagram. On the other hand, the factorization \eqref{Factoring the Tate Frobenius} of the Nikolaus-Scholze Tate-Frobenius $\varphi_\mathrm{NS}:\mathrm{TC}^-(R)^\wedge_p\to\mathrm{TP}(R)^\wedge_p$ through $\varphi^{(-1)}$  from the proof of Theorem \ref{Theorem Frobenius untwist for TP} allows us to write a commutative diagram of pullback squares 
$$
\xymatrix{
\mathrm{TC}(R)^\wedge_p\ar[rr] \ar[d]^{} & & \mathrm{TC}^-(R)^\wedge_p\ar[d]^{(\mathrm{can},\, \varphi^{(-1)})} \\ \mathrm{TP}^{(-1)}(R)\ar[rr]^{\Delta\qquad} \ar[d]^{} & & \mathrm{TP}(R)^\wedge_p\times \mathrm{TP}^{(-1)}(R) \ar[d]^{}\\
\mathrm{TP}(R){}^\wedge_p\ar[rr]^{\Delta\qquad} & & \mathrm{TP}(R)^\wedge_p\times \mathrm{TP}(R)^\wedge_p,
}
$$
in which the identification of the upper left vertex with $p$-completed topological cyclic homology stems from noting that the total square exhibits said vertex as an equalizer, and hence as the fiber
$$
\mathrm{fib}\big(\mathrm{TC}^{-}(R)^\wedge_p\xrightarrow{\varphi_\mathrm{NS}\,-\,\mathrm{can}}\mathrm{TP}(R)^\wedge_p\big)\, \simeq \, \mathrm{TC}(R)^\wedge_p.
$$
Since the total square of the first commutative diagram is the upper square of the second diagram, and they all pullbacks, we obtained the desired identification of the total pullbacks $\mathrm{TC}^{(-1)}(R)\simeq \mathrm{TC}(R)^\wedge_p$.
\end{proof}

\begin{remark}
By the passage to homotopy groups and evenness, the assertion of Proposition \ref{Prop TC don't care} is equivalent to the statement that the syntomic complexes $\mathbf Z_p(i)(R)$ are insensitive to Nygaard-completion for all $i\in \mathbf Z$, in the sense that
$$
\mathbf Z_p(i)(R)\, :=\, \mathrm{fib}\big(\mathrm{Fil}^i_{\mathcal N}\,\mathbbl \Delta_R\{i\}\xrightarrow{\varphi\,-\,1}\mathbbl\Delta_R\{i\}\big)\, \simeq \,
 \mathrm{fib}\big(\mathrm{Fil}^i_{\mathcal N}\,\widehat{\mathbbl \Delta}_R\{i\}\xrightarrow{\widehat{\varphi}\,-\,1}\widehat{\mathbbl\Delta}_R\{i\}\big)
$$
This can be shown directly via the same argument as we have given above, see \cite[Proposition 7.12]{Antieau-Krause-Nikolaus}.
\end{remark}

\begin{remark}
The constructions of this section are expected to be related to the decompleted variants of topological cyclic homology $\mathrm{TP}_{\mathbbl\Delta}(R)$, $\mathrm{TC}^-_{\mathbbl\Delta}(R)$ from the upcoming work of Devalapurkar-Hahn-Raksit-Yuan. Part of their construction is discussed in
\cite[Construction 5.2]{Equivariant aspects of de-completing cyclichomology}, where a genuine $\mathbf T$-equivariant $\mathrm{THH}_{\mathbbl\Delta}(R)$ is constructed for any animated ring $R$, not merely qrsp.
\end{remark}

The Frobenius-untwisted functors $\mathrm{TP}^{(-1)}$ and $\mathrm{TC}^{-(-1)}$ were defined above only for qrsp ring, but they can be extended to a broader class of inputs via quasi-syntomic descent.

\begin{cons}\label{TC and TP for QSyn}
Let $\mathrm{FSch}^\mathrm{qsyn}_{\Z_p}$ denote the category of quasi-syntomic $p$-adic formal scheme. The restriction of their  functors of points
 to the full subcategory of qrsp affines induces a functor
$
\mathrm{FSch}^\mathrm{qsyn}_{\Z_p}\hookrightarrow\mathrm{Shv}_{\mathrm{qsyn}}(\CAlg^\mathrm{qrsp}_{\Z_p}),
$
which is fully faithful by \cite[Proposition 4.31]{BMS2}. Since the $\i$-category of $\E$-rings $\CAlg$ is has all small limits, the
 functors $\mathrm{TP}^{(-1)}, \mathrm{TC}^{-(-1)}:\CAlg_{\Z_p}^\mathrm{qrsp}\to\CAlg$
give rise via right Kan extension to limit-preserving functors $\Fun(\CAlg_{\Z_p}^\mathrm{qrsp}, \mathrm{Ani})^\mathrm{op}\to \CAlg$.
We now define the functors $X\mapsto \mathrm{TP}^{(-1)}(X)$ and $X\mapsto \mathrm{TC}^{-(-1)}(X)$ for quasi-syntomic $p$-adic formal schemes $X$ as the composites
$$
(\mathrm{FSch}^\mathrm{qsyn}_{\Z_p})^\mathrm{op}\hookrightarrow\mathrm{Shv}_{\mathrm{qsyn}}(\CAlg^\mathrm{qrsp}_{\Z_p})^\mathrm{op}\subseteq\Fun(\CAlg^\mathrm{qrsp}_{\Z_p}, \mathrm{Ani})^\mathrm{op}\to \CAlg.
$$
\end{cons}

\begin{remark}\label{Colimit presentation of TP and TC}
 The Frobenius-untwisted topological periodic and negative cyclic homology of a quasi-syntomic $p$-adic formal scheme $X$ is given by
$$
\mathrm{TP}^{(-1)}(X) \, \simeq \varprojlim_{R\in \mathrm{CAlg}^\mathrm{qrsp}_{\mathbf Z_p/X}} \mathrm{TP}^{(-1)}(R), \qquad
\text{\textlira} X \,\simeq \varprojlim_{R\in \mathrm{CAlg}^\mathrm{qrsp}_{\mathbf Z_p/X}} \mathrm{TC}^{-(-1)}(R),
$$
where the indexing category $\mathrm{CAlg}^\mathrm{qrsp}_{\Z_p/X}$ consists of a pair of a qrsp ring $R$ together with a map $\Spf(R)\to X$.
\end{remark}

By the construction of the natural transformation $\varphi, \mathrm{can}:\mathrm{TP}^{(-1)}\to\mathrm{TC}^{-(-1)}$ given above between functors $\CAlg_{\Z_p}^\mathrm{qrsp}\to\CAlg$, it is clear that they also extend to natural transformations of functors $(\mathrm{FSch}^\mathrm{qsyn}_{\Z_p})^\mathrm{op}\to\CAlg$.
At this point, we obtain a version of
Proposition \ref{Prop TC don't care}
for quasi-syntomic $p$-adic formal schemes for no extra work.

\begin{prop}\label{Prop TC don't care II}
For any quasi-syntomic $p$-adic formal scheme $X$, there is a canonical equivalence of $\E$-rings
$$
\mathrm{TC}(X)^\wedge_p\,\simeq \, \mathrm{fib}\big(\mathrm{TC}^{-(-1)}(X)\xrightarrow{\varphi\,-\,\mathrm{can}}\mathrm{TP}^{(-1)}(X)\big).
$$
\end{prop}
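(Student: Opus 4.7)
The plan is to reduce the statement to the already-established qrsp case of Proposition \ref{Prop TC don't care} by quasi-syntomic descent. By Construction \ref{TC and TP for QSyn} (or equivalently the colimit presentation recorded in Remark \ref{Colimit presentation of TP and TC}), both functors $\mathrm{TP}^{(-1)}$ and $\mathrm{TC}^{-(-1)}$ on $(\mathrm{FSch}^\mathrm{qsyn}_{\Z_p})^\mathrm{op}$ are defined by right Kan extension from the qrsp case, and the natural transformations $\varphi, \mathrm{can}$ extend compatibly. Thus for any quasi-syntomic $p$-adic formal scheme $X$, with indexing category $\mathrm{CAlg}^\mathrm{qrsp}_{\Z_p/X}$ as above, we have
$$
\mathrm{TC}^{-(-1)}(X)\,\simeq\,\varprojlim_R \mathrm{TC}^{-(-1)}(R), \qquad \mathrm{TP}^{(-1)}(X)\,\simeq\,\varprojlim_R \mathrm{TP}^{(-1)}(R),
$$
and the maps $\varphi-\mathrm{can}$ are obtained as limits of the corresponding maps on qrsp rings.

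Next, I would invoke quasi-syntomic descent for $p$-completed topological cyclic homology (a known fact, used implicitly already in the BMS2 framework \cite{BMS2}, and ensured here because $\mathrm{TC}(-)^\wedge_p$ factors through the quasi-syntomic sheafification). This provides a natural equivalence
$$
\mathrm{TC}(X)^\wedge_p\,\simeq\,\varprojlim_{R\in\mathrm{CAlg}^\mathrm{qrsp}_{\Z_p/X}}\mathrm{TC}(R)^\wedge_p.
$$
Since fibers commute with limits in the stable $\infty$-category of spectra (and hence among $\E$-rings), it then suffices to establish the identification at the level of qrsp rings and to check that the equivalence is natural in $R$.

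For each qrsp $R$, Proposition \ref{Prop TC don't care} provides the desired equivalence $\mathrm{TC}(R)^\wedge_p\simeq \mathrm{fib}(\mathrm{TC}^{-(-1)}(R)\xrightarrow{\varphi-\mathrm{can}}\mathrm{TP}^{(-1)}(R))$. The only non-formal point is naturality of this equivalence in $R$: inspecting the proof of Proposition \ref{Prop TC don't care}, the equivalence is constructed from a diagram of pullback squares built solely out of $\mathrm{TC}^-(R)^\wedge_p\to \mathrm{TP}(R)^\wedge_p$, the factorization \eqref{Factoring the Tate Frobenius} of the Nikolaus-Scholze Tate-Frobenius through $\mathrm{TP}^{(-1)}(R)$, and the defining pullback square of $\mathrm{TC}^{-(-1)}(R)$ from Construction \ref{Cons untwisted TC^-}, all of which are functorial in qrsp $R$ by Theorem \ref{Theorem Frobenius untwist for TP} and Construction \ref{Cons untwisted TC^-}. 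Taking the limit over $\mathrm{CAlg}^\mathrm{qrsp}_{\Z_p/X}$ of the qrsp equivalence then yields the claim.

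The main potential obstacle is the quasi-syntomic descent for $\mathrm{TC}(-)^\wedge_p$, but this is a well-established consequence of the descent already enjoyed by $\mathrm{TC}^-(-)^\wedge_p$ and $\mathrm{TP}(-)^\wedge_p$ in \cite{BMS2}, since $\mathrm{TC}(-)^\wedge_p$ is their equalizer and equalizers commute with limits. With this in hand, the argument is entirely formal, and the naturality verification for the qrsp equivalence is immediate from inspection of the previous proof.
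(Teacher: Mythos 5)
Your proposal is correct and follows essentially the same route as the paper: both reduce to the qrsp case of Proposition \ref{Prop TC don't care} by observing that $\mathrm{TC}(-)^\wedge_p$, $\mathrm{TC}^{-(-1)}$, and $\mathrm{TP}^{(-1)}$ are all extended from qrsp rings in a limit-preserving way and that fibers commute with limits. Your additional remarks on the naturality of the qrsp equivalence and on quasi-syntomic descent for $\mathrm{TC}(-)^\wedge_p$ make explicit points the paper leaves implicit, but do not constitute a different argument.
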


\begin{proof}
Since all of  the functors $\mathrm{TC}^\wedge_p$, $\mathrm{TC}^{-(-1)},$ and $\mathrm{TP}^{(-1)}$ are  extended from qrsp rings in a limit-preserving way, and fibers also commute with limits, this follows directly from Proposition \ref{Prop TC don't care}.
\end{proof}

\section{The even periodic enhancements}

After having defined the Frobenius-untwisted topological cyclic homology variants $\mathrm{TP}^{(-1)}$ and $\mathrm{TC}^{-(-1)}$ in the previous section, we now wish to extend them via descent to functors that intake and both output algebro-geometric objects. On the most basic level, we roughly wish to pass from the associations
$$
\Spf(R)\mapsto \Spf\left(\mathrm{TP}^{(-1)}(R)\right), \qquad \Spf(R)\mapsto \Spf\left(\mathrm{TC}^{-(-1)}(R)\right)
$$
for qrsp rings $R$ by means of descent to certain functors
$$
\{\text{$p$-adic formal schemes}\} \to \{\text{formal spectral stacks}\},
$$
with appropriate restrictions on both sides and corresponding modifications. We will accomplish this in Construction \ref{Const of pounds}.

For this purpose, we require a theory of formal spectral algebraic geometry. If we could restrict attention to the connective setting -- i.e.\ spectral algebraic geometry built upon formal affines $\Spf(A)$ where $A$ are connective adic $\E$-rings-- then \cite[Chapter 8]{SAG} would be entirely satisfactory. 
But since we are seeking even \textit{periodic} enhancements of the prismatization stacks, the connective assumption would be untenable.
Instead, we adopt a  functor of points approach to  formal spectral stacks. We briefly summarize the setup, but direct the reader to consult \cite[Section 5]{Even periodization} for details.

 \begin{summary}
 The $\i$-category of \textit{formal spectral stacks} is taken to be the full subcategory
 $$
 \mathrm{FSpStk}\,:=\,\mathrm{Shv}_\mathrm{fpqc}^\mathrm{acc}(\CAlg_\mathrm{ad})\,\subseteq\,\Fun(\CAlg_\mathrm{ad}, \mathrm{Ani})
 $$
 spanned by all the accessible sheaves for an adic version of the fpqc topology.
The variants of \textit{even periodic formal spectral stacks} and \textit{classical formal spectral stacks} are respectively obtained as
 $$
 \mathrm{FSpStk}^\mathrm{evp}\, :=\, \mathrm{Shv}_\mathrm{fpqc}^\mathrm{acc}(\CAlg^\mathrm{evp}_\mathrm{ad}),\qquad
 \mathrm{FStk}^\heart\,:=\, \mathrm{Shv}_\mathrm{fpqc}^\mathrm{acc}(\CAlg^\heart_\mathrm{ad}),
 $$
defined in terms of the subcategories $\CAlg_\mathrm{ad}^\mathrm{evp}, \CAlg_\mathrm{ad}^\heart\subseteq\CAlg_\mathrm{ad}$ of \textit{even periodic} and \textit{classical adic $\E$-rings} respectively. There are canonical functors
$$
(-)^\mathrm{evp} : \mathrm{FSpStk}\to\mathrm{FSpStk}^\mathrm{evp}, \qquad (-)^\heart : \mathrm{FSpStk}\to \mathrm{FStk}^\heart,
$$
the \textit{even periodization} and \textit{underlying classical stack},  informally given by
$$
\mathfrak X^\mathrm{evp}\,\,\simeq \varinjlim_{A\in\CAlg^\mathrm{ev}_{\mathrm{ad}/\mathfrak X}}\Spf(A), \qquad \mathfrak X^\heart\,\,\simeq \varinjlim_{A\in\CAlg^\heart_{\mathrm{ad}/\mathfrak X}}\Spf(\pi_0(A)).
$$
The functors of \textit{the $\E$-ring of global sections} and \textit{$\i$-category of quasi-coherent sheaves}
$$
\sO : \mathrm{FSpStk}^\mathrm{op}\to \CAlg, \qquad \QCoh :\mathrm{FSpStk}^\mathrm{op}\to \CAlg(\mathcal P\mathrm{r^L})
$$
are obtained by right Kan extension and fpqc sheafification of the respective functors
\begin{equation}\label{O and QCoh wish}
\Spf(A)\mapsto A, \qquad \Spf(A)\to\Mod_A^\mathrm{cplt}.
\end{equation}
When restricted to $\mathrm{FSpStk}^\mathrm{evp}$ and $\mathrm{FStk}^\heart$ (as well as the obviously-defined $\mathrm{FSpStk}^\mathrm{cn}$), the functors $\sO$ and $\QCoh$ do not require fpqc sheafification, and therefore \textit{recover} the functors \eqref{O and QCoh wish} on formal affines.
 \end{summary}

The even periodization functor $\mathfrak X\mapsto \mathfrak X^\mathrm{evp}$ provides  the universal means of replacing a not-necessarily-even-periodic spectral stack with an even periodic one. This is the role which it will play in this paper too, but this role will be rather inessential thanks to the following explicit description:

\begin{lemma}\label{Lemma evp for TC^-}
Let $R$ be a qrsp ring. The even periodization. There is a canonical and natural equivalence of even periodic formal spectral stacks
$$
\Spf\left(\mathrm{TC}^{-(-1)}(R)\right){}^\mathrm{evp}\,\simeq \, \Spf\left(\mathrm{TC}^{-(-1)}(R)\otimes_{\mathrm{MU}}\mathrm{MUP} \right)/\mathbf G_m.
$$
\end{lemma}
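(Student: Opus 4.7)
The plan is to exhibit the right-hand side as satisfying the universal property of the even periodization of $\Spf(A)$, where $A := \mathrm{TC}^{-(-1)}(R)$. By Proposition \ref{Prop homotopy groups of untiwsted TC^-}, this $\E$-ring is even, so it carries a canonical $\MU$-algebra structure coming from the obstruction-theoretic fact that every even $\E$-ring admits a canonical complex orientation. This makes the tensor product $A \otimes_{\MU} \MUP$ well-defined, and since $\MUP$ is even periodic, so is the tensor product.

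The natural $\mathbf G_m$-action on $\MUP$ arising from its $\Z$-graded structure $\MUP \simeq \bigoplus_{i \in \Z} \Sigma^{2i}\MU$ induces a $\mathbf G_m$-action on $A \otimes_{\MU} \MUP$, with the property that the weight-zero piece recovers $A$ itself. At the level of homotopy groups this is the observation that $\pi_\ast(A \otimes_{\MU} \MUP) \simeq \pi_\ast(A)[\beta^{\pm}]$ with $\beta$ of weight one and homotopy degree $2$, recovering $\pi_{2i}(A)$ in degree $2i$ on the weight-zero piece.

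The crux is to verify the universal property: any even periodic formal spectral stack $\mathfrak Y$ with a map to $\Spf(A)$ factors uniquely through $\Spf(A \otimes_{\MU} \MUP)/\mathbf G_m$. It suffices to treat affine probes $\Spf(B) \to \Spf(A)$ with $B$ an even periodic adic $\E$-ring; such a map corresponds to an $\E$-ring map $A \to B$, and since $B$ is even periodic it is canonically an $\MUP$-algebra. This induces an $\MU$-algebra map $A \otimes_{\MU} \MUP \to B$ which is $\mathbf G_m$-equivariant when $B$ carries the trivial action, and so descends to the desired map $\Spf(B) \to \Spf(A \otimes_{\MU} \MUP)/\mathbf G_m$.

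The main obstacle I anticipate is making all of this precise at the $\infty$-categorical level, in particular ensuring that the canonical $\MU$-algebra structure on an even $\E$-ring is functorial and that the universal property above holds coherently rather than only on homotopy groups. These issues are likely subsumed by a general formula for the even periodization of formal affines with even structure sheaf recorded in \cite{Even periodization}, in which case the proof reduces to citing that result. Naturality in $R$ then follows from the functoriality of each ingredient: the $\MU$-algebra structure on $\mathrm{TC}^{-(-1)}(R)$ coming from Theorem \ref{Theorem Frobenius untwist for TP} and Construction \ref{Cons untwisted TC^-}, the tensor product with $\MUP$, and the $\mathbf G_m$-quotient.
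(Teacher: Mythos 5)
There is a genuine gap at the very first step: the source of the $\mathrm{MU}$-algebra structure on $A = \mathrm{TC}^{-(-1)}(R)$. You invoke ``the obstruction-theoretic fact that every even $\E$-ring admits a canonical complex orientation,'' but a complex orientation is only a map of \emph{homotopy} ring spectra $\mathrm{MU}\to A$ (equivalently a class in $A^2(\mathbf{CP}^\infty)$ restricting to the unit); it is neither canonical (the space of orientations of an even ring is nonempty but far from contractible, so no functorial choice exists) nor an $\E_\infty$-ring map. To form the relative tensor product $A\otimes_{\mathrm{MU}}\mathrm{MUP}$ as an $\E$-ring, and a fortiori to take $\Spf$ of it, you need a genuine $\E_\infty$-algebra structure over $\mathrm{MU}$, which an arbitrary even $\E$-ring need not admit at all. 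The same problem recurs in your verification of the universal property, where you assert that an even periodic probe $B$ ``is canonically an $\mathrm{MUP}$-algebra.'' The correct and much simpler source of the structure, which is what the paper uses, is that $\mathrm{TC}^{-(-1)}(R)$ is by construction an $\E$-algebra over the Eilenberg--MacLane spectrum $\mathbf Z$; composing with the $\E$-ring map $\mathrm{MU}\to\pi_0(\mathrm{MU})\simeq\mathbf Z$ makes it an even $\E_\infty$-$\mathrm{MU}$-algebra, canonically and naturally in $R$.

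Once that is repaired, the remainder of your argument is essentially the paper's: one does not verify the universal property by hand but simply cites the general formula $\Spf(A)^{\mathrm{evp}}\simeq\Spf(A\otimes_{\mathrm{MU}}\mathrm{MUP})/\mathbf G_m$ for even adic $\E_\infty$-$\mathrm{MU}$-algebras, which is \cite[Proposition 4.5.4]{Even periodization}. Your closing hedge correctly anticipates this, but note that the cited result still takes the $\mathrm{MU}$-algebra structure as an input, so the hedge does not by itself close the gap identified above; in particular the naturality in $R$ must come from the $\mathbf Z$-algebra structure, not from Theorem \ref{Theorem Frobenius untwist for TP} or Construction \ref{Cons untwisted TC^-} as you suggest.
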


\begin{proof}
Observe that $\mathrm{TC}^{-(-1)}(R)$ is by construction an even $\E$-algebra over $\mathbf Z$ by Proposition \ref{Prop homotopy groups of untiwsted TC^-}. It is therefore also an even $\E$-algebra over $\mathrm{MU}$ via the $\E$-ring map $\mathrm{MU}\to \pi_0(\mathrm{MU})\simeq\mathbf Z$ also an even $\E$-algebra over $\mathrm{MU}$. The desired equivalence is thus a special case of \cite[Proposition 4.5.4]{Even periodization}, according to which the even periodization for any even adic $\E$-algebra $A$ over $\mathrm{MU}$ is given by
$$
\Spf(A)^\mathrm{evp}\, \simeq\,\Spf\left(A\otimes_{\mathrm{MU}}\mathrm{MUP} \right)/\mathbf G_m,
$$
where the $\mathbf G_m$-action quotiented out on the right corresponds in the usual way to the  $\Z$-grading exhibited by $A\otimes_{\mathrm{MU}}\mathrm{MUP}\simeq \bigoplus_{i\in \mathbf Z}\Sigma^{-2i}(A)$.
\end{proof}

Since we wish to obtain even periodic spectral stacks, Lemma \ref{Lemma evp for TC^-} suggests how to modify $\mathrm{TC}^{-(-1)}$. The functor $\mathrm{TP}^{(-1)}$ requires no such modification, as their values on qrsp rings are even periodic by Theorem \ref{Theorem Frobenius untwist for TP}. To extend these functors to stacks as we desire, we must verify that they are appropriately compatible with the Grothendieck topologies on both sides.

\begin{prop}\label{Prop Descent for QRSPs}
The functors $\mathrm{CAlg}_{\mathbf Z_p}^\mathrm{qrsp}\to\mathrm{CAlg}^\mathrm{evp}_\mathrm{ad}$, given by
$$
R\mapsto\mathrm{TP}^{(-1)}(R),\qquad  R\mapsto \mathrm{TC}^{-(-1)}(R)\otimes_{\mathrm{MU}}\mathrm{MUP}
$$
take quasi-syntomic covers to adically faithfully flat covers.
\end{prop}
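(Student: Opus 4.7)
The plan is to reduce adic faithful flatness of each $\E$-ring map to a classical faithful-flatness statement on $\pi_0$, and then invoke quasi-syntomic descent for prismatic cohomology as established in \cite{BMS2}.

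First I would argue that for any map of even periodic adic $\E$-rings $A\to B$, adic faithful flatness is equivalent to adic faithful flatness of $\pi_0(A)\to \pi_0(B)$, since Bott periodicity pins down all higher homotopy from $\pi_0$ and forces $\pi_*(B)\simeq \pi_*(A)\otimes_{\pi_0(A)}\pi_0(B)$ as soon as $\pi_0(A)\to \pi_0(B)$ is flat. For the first functor, Theorem \ref{Theorem Frobenius untwist for TP} then identifies the map on $\pi_0$ induced by a quasi-syntomic cover $R\to R'$ of qrsp rings with $\mathbbl{\Delta}_R\to \mathbbl{\Delta}_{R'}$ in the $(p,I)$-adic topology, and the $(p,I)$-adic faithful flatness of this last map is the standard quasi-syntomic descent for prismatic cohomology. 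For the second functor, the $\otimes_{\mathrm{MU}}\mathrm{MUP}$ promotes $\mathrm{TC}^{-(-1)}(R)$ to an even periodic $\E$-ring whose $\pi_0$ is, by Proposition \ref{Prop homotopy groups of untiwsted TC^-}, the $\mathbf{Z}$-graded Rees-type ring $\bigoplus_{i\in \mathbf{Z}}\mathrm{Fil}^i_{\mathcal{N}}\,\mathbbl{\Delta}_R\{i\}$, and the same reduction reroutes the question to adic faithful flatness of each graded piece $\mathrm{Fil}^i_{\mathcal{N}}\,\mathbbl{\Delta}_R\to \mathrm{Fil}^i_{\mathcal{N}}\,\mathbbl{\Delta}_{R'}$; this is a version of the same quasi-syntomic descent, now for Nygaard-filtered prismatic cohomology.

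The main obstacle I expect is bookkeeping the adic topologies and, in particular, the Nygaard-decompletion: the familiar descent statements in \cite{BMS2} are most naturally phrased for the Nygaard-completed prismatic cohomology $\widehat{\mathbbl{\Delta}}_R$ and its filtration pieces, whereas here I need the decompleted $\mathbbl{\Delta}_R$ and $\mathrm{Fil}^i_{\mathcal{N}}\,\mathbbl{\Delta}_R$. To bridge that gap I would exploit the pullback square of Construction \ref{Cons untwisted TC^-} together with the colimit presentation of Remark \ref{Remark 1.3}, which expresses the decompleted objects as $(p,d)$-completed filtered colimits of Nygaard-complete pieces; uniform faithful flatness in this filtered diagram then descends through the colimit and survives the subsequent completion, yielding the desired adic faithful flatness on the nose.
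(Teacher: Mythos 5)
Your overall skeleton matches the paper's: reduce adic faithful flatness of the $\E$-ring maps to the corresponding statement on $\pi_0$ using even periodicity, and then quote faithful flatness of (Nygaard-filtered) prismatic cohomology along quasi-syntomic covers. The first reduction is fine and is exactly what the paper does. The divergence, and the gap, is in how you justify the $\pi_0$-level input.

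The statement you actually need is that for a quasi-syntomic cover $R\to R'$ of qrsp rings, the maps $\mathbbl\Delta_R\to\mathbbl\Delta_{R'}$ and $\bigoplus_i\mathrm{Fil}^i_{\mathcal N}\,\mathbbl\Delta_R\{i\}\to\bigoplus_i\mathrm{Fil}^i_{\mathcal N}\,\mathbbl\Delta_{R'}\{i\}$ are $(p,I)$-completely faithfully flat for the \emph{Nygaard-decompleted} prismatic complexes. This is not "the standard quasi-syntomic descent of \cite{BMS2}": the results there concern $\widehat{\mathbbl\Delta}_R$ and its filtration, as you yourself flag. The paper resolves this not by decompleting the BMS2 statement but by citing results that are already stated for the decompleted objects: \cite[Lemma 6.3]{BLb} (prismatization sends quasi-syntomic covers to faithfully flat covers of stacks) combined with the affineness identifications $\Spf(R)^{\mathbbl\Delta}\simeq\Spf(\mathbbl\Delta_R)$ and $\Spf(R)^{\mathcal N}\simeq\Spf\big(\bigoplus_i\mathrm{Fil}^i_{\mathcal N}\,\mathbbl\Delta_R\big)/\mathbf G_m$ from \cite[Theorems 5.5.7 and 5.5.10]{Bhatt F-gauges}. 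Your proposed bridge --- running faithful flatness through the filtered colimit presentation of Remark \ref{Remark 1.3} and the subsequent $(p,d)$-completion --- is not an argument as written. The terms $\mathrm{Fil}^i_{\mathcal N}\,\widehat{\mathbbl\Delta}_R$ in that colimit are modules, not rings, so "uniform faithful flatness in the diagram" needs to be replaced by a base-change statement $\mathrm{Fil}^i_{\mathcal N}\,\widehat{\mathbbl\Delta}_{R'}\simeq\mathrm{Fil}^i_{\mathcal N}\,\widehat{\mathbbl\Delta}_R\,\widehat{\otimes}_{\widehat{\mathbbl\Delta}_R}\widehat{\mathbbl\Delta}_{R'}$ compatible with the transition maps, which is itself part of what is at stake; and complete flatness is not automatically inherited by a completed filtered colimit without controlling torsion uniformly. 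You would essentially be re-proving the Bhatt--Lurie results. The fix is simply to cite them, as the paper does, rather than to manufacture the decompleted descent from the completed one.
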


\begin{proof}
Let $R\to S$ be a quasi-syntomic cover of qrsp rings. We must show that the corresponding maps of adic $\E$-ring
$$
\mathrm{TP}^{(-1)}(R)\to \mathrm{TP}^{(-1)}(S), \qquad \mathrm{TC}^{-(-1)}(R)\o_{\mathrm{MU}}\mathrm{MUP}\to \mathrm{TC}^{-(-1)}(S)\otimes_{\mathrm{MU}}\mathrm{MUP}
$$
are both $(p, I)$-adically faithfully flat, where 
$$
I\subseteq \mathbbl\Delta_R =\pi_0\left(\mathrm{TP}^{(-1)}(R)\right) = \pi_0\left(\mathrm{TC}^{-(-1)}(R)\right)
$$
denotes the prismatic ideal.
Seeing how the functors $\mathrm{TP}^{(-1)}$ and 
$$
\mathrm{TC}^{-(-1)}\otimes_{\mathrm{MU}}\mathrm{MUP}\simeq \bigoplus_{i\in \mathbf Z} \Sigma^{-2i} \mathrm{TC}^{-(-1)}
$$
both output $p$-torsion-bounded even periodic $\E$-rings, it suffices to show that the corresponding statement holds on $\pi_{0}$. That is to say, the result in question boils down to $\mathbbl\Delta_R\to\mathbbl\Delta_S$, as well as their respective map between the Rees constructions of their Nygaard filtrations, are $(p, I)$-completely faithfully flat.

For $\mathbbl\Delta_R$, this follows by combining that by \cite[Lemma 6.3]{BLb},   prismatization  $R\mapsto\Spf(R)^{\mathbbl\Delta}$ sends quasi-syntomic covers to faithfully flat covers,  and the formal affineness result $\mathrm{Spf}(R)^{\mathbbl\Delta}\simeq\Spf(\mathbbl\Delta_R)$ of \cite[Theorem 5.5.7]{Bhatt F-gauges}  for any qrsp ring $R$.
For the Nygaard filtration $\mathrm{Fil}_{\mathcal N}^*\,\mathbbl\Delta_R$, or more precisely for its Rees algebra, we can similarly deduce this from the identification of the filtered prismatization
$$
\mathrm{Spf}(R)^{\mathcal N}\simeq \Spf\big(\bigoplus_{i\in \mathbf Z}\mathrm{Fil}^i_{\mathcal N}\,\mathbbl\Delta_R \big )/\mathbf G_m
$$
of a qrsp ring $R$
from \cite[Theorem 5.5.10]{Bhatt F-gauges}.
\end{proof}

At this point, we have everything needed to construct the desired extensions of $\mathrm{TP}^{(-1)}$ and $\mathrm{TC}^{-(-1)}$ to stacks. These  will also turn out by Theorem \ref{Main Theorem in Text} to be the titular even periodic enhancements of prismatization stacks that this paper is about.

\begin{cons}\label{Const of pounds}
The functors $\mathrm{CAlg}^\mathrm{qrsp}_{\mathbf Z_p}\to \mathrm{FSpStk}^\mathrm{evp}$, given by
$$
R\mapsto \Spf\left(\mathrm{TP}^{(-1)}(R)\right), 
\qquad R
\mapsto\Spf\left(\mathrm{TC}^{-(-1)}(R)\right){}^\mathrm{evp},
$$
extend by Proposition \ref{Prop Descent for QRSPs} to the respective $\i$-categories of accessible sheaves
$$
\pounds\,:\,\mathrm{Shv}^\mathrm{acc}_\mathrm{qsyn}(\mathrm{CAlg}_{\mathbf Z_p}^\mathrm{qrsp})\to \mathrm{FSpStk}^\mathrm{evp}, \quad
\text{\textlira}\,:\,\mathrm{Shv}^\mathrm{acc}_\mathrm{qsyn}(\mathrm{CAlg}_{\mathbf Z_p}^\mathrm{qrsp})\to \mathrm{FSpStk}^\mathrm{evp}.
$$
We use the same notation $X\mapsto \pounds X, \text{\textlira}X$ to denote the composites
$$
\mathrm{FSch}_{\mathbf Z_p}^\mathrm{qsyn}\hookrightarrow\mathrm{Shv}^\mathrm{acc}_\mathrm{qsyn}(\mathrm{CAlg}_{\mathbf Z_p}^\mathrm{qrsp})\xrightarrow{\pounds, \text{\textlira}}\mathrm{FSpStk}^\mathrm{evp}
$$
of the above-constructed  functors $\pounds, \text{\textlira}$
with the restriction of the functors of points of quasi-syntomic $p$-adic formal schemes to the full subcategory of qrsp affines.
\end{cons}

We refer to $\pounds X$ as the \textit{spherical Tate-loop space} and to $\text{\textlira}X$ as the \textit{spherical Nygaard-loop space} of the quasi-syntomic $p$-adic formal scheme $X$ respectively.

\begin{remark}\label{Colimit presentation of pounds and lira}
Let $X$ be
a quasi-syntomic $p$-adic formal scheme. Its corresponding spherical Tate-loop and Nygaard-loop spaces may be expressed as
$$
\pounds X \, \simeq\, \varinjlim_{R\in \mathrm{CAlg}^\mathrm{qrsp}_{\mathbf Z_p/X}} \Spf\left(\mathrm{TP}^{(-1)}(R)\right), \quad
\text{\textlira} X \,\simeq \varinjlim_{R\in \mathrm{CAlg}^\mathrm{qrsp}_{\mathbf Z_p/X}} \Spf\left(\mathrm{TC}^{-(-1)}(R)\right){}^\mathrm{evp},
$$
where the indexing category $\mathrm{CAlg}^\mathrm{qrsp}_{\Z_p/X}$ consists of pairs of a qrsp rings $R$ together with a map $\Spf(R)\to X$.
\end{remark}

\begin{remark}
In colimit formulas, such as those for $\pounds X$, and $\text{\textlira}X,$ in  Remark \ref{Colimit presentation of pounds and lira}, it is sometimes convenient to replace the indexing category $\mathrm{CAlg}^\mathrm{qrsp}_{\mathbf Z_p/X}$ with a cofinal small subcategory. For instance, we may fix a formally affine Zariski open cover $\{U_i\}$ of $X$, and take $\mathcal C\subseteq\mathrm{CAlg}^\mathrm{qrsp}_{/X}$ to be the full subcategory of quasi-syntomic morphisms of the form $\Spf(R)\to U_i\subseteq X$ for any $i$. In light of such a substitution being straightforward, we will not explicitly comment upon it.
\end{remark}

\begin{remark}\label{Remark spherical loops og}
Both the terminology of the spherical Tate-loop and Nygaard-loop spaces, as well as the notation $\pounds, \text{\textlira}$, are all intended to encourage viewing  the spectral stacks $\pounds X$  and $\text{\textlira}X$ as modifications of
$$
\mathcal LX\,:=\, (\mathscr LX)^{\mathrm{evp}}\,\,\simeq  \varinjlim_{R\in \mathrm{CAlg}^\mathrm{qrsp}_{\mathbf Z_p/X}}\mathrm{Spf}\left(\mathrm{THH}(R)^\wedge_p\right){}^\mathrm{evp},
$$
the even periodization of the \textit{spherical free loop space} $\mathscr LX \simeq\underline{\Map}_{\mathrm{SpStk}}(\underline{\mathbf T}_{\Spec(\mathbf S)}, X)$ on $X$. The adjective \textit{spherical} is intended to emphasize that, even though $X$ is defined over $\Spf(\mathbf Z_p)$, we are taking the loop space over the sphere spectrum, or more precisely, over $\mathrm{Spf}(\mathbf S^\wedge_p)$.
\end{remark}

The Tate-loop and Nygaard-loop spaces $\pounds X$ and $\text{\textlira}X$ of Construction \ref{Const of pounds}
are the quasi-syntomic-descended stacky versions of $\mathrm{TP}^{(-1)}$ and $\mathrm{TC}^{-(-1)}$ respectively.
The next construction is roughly an analogous stacky variant of topological cyclic homology $\mathrm{TC}$ (which is invariant under Frobenius untwisting by Proposition \ref{Prop TC don't care}).

\begin{cons}
The maps of $\E$-rings 
$$
\mathrm{can}, \varphi :\mathrm{TC}^{-(-1)}(R)\rightrightarrows \mathrm{TP}^{(-1)}(R)
$$
for qrsp rings $R$ from Constructions \ref{Cons untwisted TC^-} and \ref{Cons of div Frob} induce through the mechanism of Construction \ref{Const of pounds} a canonical pair of eponymous maps of even periodic formal spectral stacks from $\text{\textlira}X$ to $\pounds X$. Let the even periodic formal spectral stack $\$ X$ be their coequalizer. That is to say, we set
$$
\$ X\,:= \,\mathrm{coeq}\big(\xymatrix{ \pounds X \ar@<0.6ex>[r]^\varphi \ar@<-0.6ex>[r]_{\mathrm{can}} & \text{\textlira}X}\big),
$$
and refer to it as the \textit{spherical syntomification of $X$}.
\end{cons}

\begin{remark}
In the usual way of rephrasing an coequalizer as a pushout, the spherical syntomification of $X$ may be equivalently obtained through the pushout square
$$
\begin{tikzcd}
\pounds X \coprod \pounds X\arrow{d}{\nabla} \arrow{rr}{(\mathrm{can},\,\varphi)\,\,\,} & & \text{\textlira} X\arrow{d}{}\\
\pounds X\arrow{rr}{} & & \$X.
\end{tikzcd}
$$
Comparing this with the definition of the syntomification stack as the pushout
$$
\begin{tikzcd}
 X^{\mathbbl\Delta} \coprod  X^{\mathbbl\Delta}\arrow{d}{\nabla} \arrow{rr}{(j_\mathrm{dR},\,j_\mathrm{HT})\,\,\,} & & X^{\mathcal N}\arrow{d}{}\\
X^{\mathbbl\Delta}\arrow{rr}{} & & X^\mathrm{Syn}
\end{tikzcd}
$$
from \cite[Remark 5.5.18]{Bhatt F-gauges} 
suggests a relationship between syntomification and spherical syntomification which is borne out by the next result.
\end{remark}

\begin{theorem}\label{Main Theorem in Text}
Let $X$ be a  quasi-syntomic $p$-adic formal scheme $X$. There are canonical natural equivalences of classical stacks
$$
(\begin{normalfont}\pounds \end{normalfont}X)^\heartsuit \simeq X^{\mathbbl\Delta},\qquad  (\begin{normalfont}\text{\textlira}\end{normalfont}X)^\heart\simeq X^\mathcal N,\qquad (\begin{normalfont}\$\end{normalfont}X)^\heart\simeq X^\mathrm{Syn}.$$
\end{theorem}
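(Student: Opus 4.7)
The plan is to reduce to the case of a qrsp affine $X = \Spf(R)$ and then verify the three identifications directly. Both sides of each claimed equivalence satisfy quasi-syntomic descent from qrsp affines: the left-hand sides do so by Construction \ref{Const of pounds} (with explicit colimit formulas recorded in Remark \ref{Colimit presentation of pounds and lira}), while the right-hand side prismatization stacks do so via the formulas \eqref{Prism stacks via qsyn desc}, with $X^\mathrm{Syn}$ obtained as a coequalizer of two maps out of $X^{\mathbbl\Delta}$ into $X^{\mathcal N}$. Since the underlying classical stack functor $(-)^\heart$ commutes with colimits, the theorem reduces to establishing natural equivalences at the level of qrsp affines.

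For $\pounds$, unfolding the definition gives $\pounds\Spf(R) \simeq \Spf(\mathrm{TP}^{(-1)}(R))$, so $(\pounds\Spf(R))^\heart \simeq \Spf(\pi_0(\mathrm{TP}^{(-1)}(R)))$. By Theorem \ref{Theorem Frobenius untwist for TP} the latter equals $\Spf(\mathbbl\Delta_R)$, which matches $\Spf(R)^{\mathbbl\Delta}$ via \eqref{Prism stacks via qsyn desc}. For $\text{\textlira}$, Lemma \ref{Lemma evp for TC^-} identifies $\text{\textlira}\Spf(R) \simeq \Spf(\mathrm{TC}^{-(-1)}(R) \otimes_{\mathrm{MU}} \mathrm{MUP})/\mathbf G_m$. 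Taking the underlying classical stack and using Proposition \ref{Prop homotopy groups of untiwsted TC^-} to compute the relevant $\pi_0$ as $\bigoplus_{i \in \mathbf Z} \mathrm{Fil}^i_{\mathcal N}\,\mathbbl\Delta_R\{i\}$, one recovers precisely the formula from \eqref{Prism stacks via qsyn desc} for $\Spf(R)^{\mathcal N}$.

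For $\$$, using again that $(-)^\heart$ preserves the coequalizer defining it, we obtain $(\$\Spf(R))^\heart$ as the coequalizer of two maps $(\text{\textlira}\Spf(R))^\heart \rightrightarrows (\pounds\Spf(R))^\heart$, which by the previous step become maps $\Spf(R)^{\mathcal N} \rightrightarrows \Spf(R)^{\mathbbl\Delta}$. By Proposition \ref{Prop Frob on homotopy} these two maps are induced on $\pi_0$ respectively by the canonical inclusion $\mathrm{Fil}^i_{\mathcal N}\,\mathbbl\Delta_R\{i\} \hookrightarrow \mathbbl\Delta_R\{i\}$ and the divided Frobenius $\varphi\{i\}$, which are precisely the maps $j_\mathrm{dR}$ and $j_\mathrm{HT}$ exhibiting $\Spf(R)^{\mathrm{Syn}}$ as a coequalizer.

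The main obstacle is the careful bookkeeping of Breuil-Kisin twists in the $\text{\textlira}$ case: one must verify that the $\mathbf Z$-grading on $\mathrm{TC}^{-(-1)}(R) \otimes_{\mathrm{MU}} \mathrm{MUP}$, arising from the decomposition $\mathrm{MUP} \simeq \bigoplus_i \Sigma^{-2i}\mathrm{MU}$, encodes the twists $\{i\}$ compatibly with the $\mathbf G_m$-grading appearing in the description of $\Spf(R)^{\mathcal N}$ from \cite[Theorem 5.5.10]{Bhatt F-gauges}. A secondary concern is naturality of all identifications under quasi-syntomic morphisms so that descent can be applied, but this essentially follows from the functoriality asserted in Theorem \ref{Theorem Frobenius untwist for TP} and Construction \ref{Cons untwisted TC^-}.
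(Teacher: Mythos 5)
Your proposal is correct and follows essentially the same route as the paper: reduce to qrsp affines using that $(-)^\heart$ commutes with colimits, identify $(\pounds\Spf(R))^\heart$ via $\pi_0(\mathrm{TP}^{(-1)}(R))\simeq\mathbbl\Delta_R$, identify $(\text{\textlira}\Spf(R))^\heart$ via Lemma \ref{Lemma evp for TC^-} and Proposition \ref{Prop homotopy groups of untiwsted TC^-}, and handle $\$X$ by matching $\mathrm{can}^\heart, \varphi^\heart$ with $j_\mathrm{dR}, j_\mathrm{HT}$ via Proposition \ref{Prop Frob on homotopy}. The only slip is in the syntomification paragraph, where you reverse the direction of the coequalizer diagram: since $\mathrm{can},\varphi$ are ring maps $\mathrm{TC}^{-(-1)}(R)\to\mathrm{TP}^{(-1)}(R)$, the induced maps of stacks go $\pounds\Spf(R)\rightrightarrows\text{\textlira}\Spf(R)$, matching $j_\mathrm{dR}, j_\mathrm{HT}\colon \Spf(R)^{\mathbbl\Delta}\rightrightarrows\Spf(R)^{\mathcal N}$ as you correctly state earlier in the proposal.
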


\begin{proof}
Recall that the passage to the underlying classical stack $\mathfrak X\mapsto\mathfrak X^\heart$ commutes with all small colimits when viewed as a functor
$\mathrm{FSpStk}^\mathrm{evp}\to \mathrm{FStk}^\heart\simeq \mathrm{Stk}^\heart$.
In light of the colimit formula for $\pounds X$ and from Presentation \ref{Colimit presentation of pounds and lira}, we find the underlying classical stack of the spherical Tate-loop space of $X$  expressed as
$$
(\pounds X)^\heart \,\, \simeq \varinjlim_{R\in \mathrm{CAlg}^\mathrm{qrsp}_{\mathbf Z_p/X}} \Spf\left(\pi_0\left(\mathrm{TP}^{(-1)}(R)\right)\right).
$$
By the natural identification $\pi_0\left(\mathrm{TP}^{(-1)}(R)\right)\simeq \mathbbl\Delta_R$ from Theorem \ref{Theorem Frobenius untwist for TP}, we  find that
$$
(\pounds X)^\heart \, \simeq \varinjlim_{R\in \mathrm{CAlg}^\mathrm{qrsp}_{\mathbf Z_p/X}} \Spf(\mathbbl\Delta_R)\,\simeq \, X^{\mathbbl \Delta},
$$
where the identification with the prismatization of $X$ is obtained by combining \cite[Theorem 5.5.7]{Bhatt F-gauges} and \cite[Lemma 6.3]{BLb}.

We turn towards an analogous identification of the underlying classical stack $(\text{\textlira}X)^\heart$ of the spherical Nygaard-loop space. By the same argument as before, it may be write it in the form
$$
(\text{\textlira} X)^\heart
\simeq \varinjlim_{R\in \mathrm{CAlg}^\mathrm{qrsp}_{\mathbf Z_p/X}} \left(\Spf\left(\mathrm{TC}^{-(-1)}(R)\right){}^\mathrm{evp}\right){}^\heart.
$$
For an arbitrary fixed qrsp ring $R$,
the relevant underlying classical stack may be expressed according to Lemma \ref{Lemma evp for TC^-} as
\begin{eqnarray*}
\left(
\Spf\left(\mathrm{TC}^{-(-1)}(R)\right){}^\mathrm{evp}\right){}^\heart &\simeq & \Spf\Big(\bigoplus_{i\in \mathbf Z} \pi_{2i}\big(\mathrm{TC}^{-(-1)}(R)\big)  \Big)/\mathbf G_m \\
&\simeq & \Spf\Big(\bigoplus_{i\in \mathbf Z} \mathrm{Fil}^i_{\mathcal N}\, \mathbbl\Delta_R\{i\}  \Big)/\mathbf G_m\\
&\simeq& \Spf(R)^{\mathcal N}.
\end{eqnarray*}
Noting that all the above identifications are natural in the variable $R\in\mathrm{CAlg}_{\mathbf Z_p}^\mathrm{qrsp}$, the equivalence for the filtered prismatization
$$
X^\mathcal N \,\, \simeq \varinjlim_{R\in \mathrm{CAlg}^\mathrm{qrsp}_{\mathbf Z_p/X}}\Spf(R)^\mathcal N
$$
from \cite[Remark 5.5.18]{Bhatt F-gauges} completes the desired computation of $(\text{\textlira}X)^\heart$. 

Finally, we turn to the spherical syntomification $\$X$. Recall that the passage to the underlying classical stacks $\mathfrak X\mapsto \mathfrak X^\heart$ commutes with small colimits and as such in particular also with coequalizers. The desired identification of $(\$X)^\heart\simeq X^\mathrm{Syn}$ therefore follows from what we have already shown together with the definition of the syntomification in \cite[Definition 6.1.1]{Bhatt F-gauges} as the coequalizer
  $$
X^\mathrm{Syn}\,\simeq \,\mathrm{coeq}\big(\xymatrix{ X^{\mathbbl \Delta} \ar@<0.6ex>[r]^{j_\mathrm{HT}} \ar@<-0.6ex>[r]_{j_\mathrm{dR}} & X^{\mathcal N}}\big),
 $$
 so long as we show that $\varphi^\heart\simeq j_\mathrm{HT}$ and $\mathrm{can}^\heart\simeq j_\mathrm{dR}$. In light of the fact that $j_\mathrm{HT}$ and $j_\mathrm{dR}$ are quasi-syntomically-descended from qrsp affines by \cite[Remark 5.5.5]{Bhatt F-gauges}, this follows from the construction of Frobenius-untwisted versions of the maps $\varphi$ and $\mathrm{can}$, specifically from Proposition \ref{Prop Frob on homotopy}. 
\end{proof}

\begin{remark}
In \cite[Section 6]{Even periodization}, we define a \textit{coarse quotient} of an even periodic formal spectral stack $\mathfrak X$ with respect to a $\T$-action. In the case of even periodization $\mathcal LX$ of the spherical free loop spaces from Remark \ref{Remark spherical loops og} on a quasi-syntomic $p$-adic formal scheme $X$ and its loop-rotation $\T$-action, it follows from \cite[Proof of Theorem 6.4.5]{Even periodization} that the coarse quotient is given by
\begin{equation}\label{Colimit presentation for LX//T}
\mathcal LX/\!/\T\, \simeq \,  \varinjlim_{R\in \mathrm{CAlg}^\mathrm{qrsp}_{\mathbf Z_p/X}}\mathrm{Spf}\left(\mathrm{TC}^-(R)^\wedge_p\right){}^\mathrm{evp}.
\end{equation}
We saw in \cite[Theorem 6.4.5]{Even periodization} that the underlying classical stack of this spectral stack recovers the Nygaard-completed filtered prismatization $X^{\mathcal N}$, i.e.\ the pullback of the Cartesian square of classical stacks
$$
\begin{tikzcd}
X^{\widehat{\mathcal N}} \arrow{r}{} \arrow{d}{} & X^{\mathcal N}\arrow{d}{}\\
\widehat{\mathbf A}^1/\mathbf G_m \arrow{r}{} & \mathbf A^1/\mathbf G_m.
\end{tikzcd}
$$
We therefore have two different even periodic spectral enhancements $\mathcal LX/\!/\T$ and $\text{\textlira X}$ of (versions of) the filtered prismatization $X^{\widehat{\mathcal N}}$ and $X^{\mathcal N}$ respectively. 
Using the synchronized colimit presentations of Remark \ref{Colimit presentation of pounds and lira} and \eqref{Colimit presentation for LX//T}, we see that the canonical natural map $\mathrm{TC}^-(R)^\wedge_p\to \mathrm{TC}^{-(-1)}(R)$ for $R\in\CAlg_{\Z_p}^\mathrm{qrsp}$ induces a natural map of even periodic formal spectral stacks
$$
u: \mathcal LX/\!/\T \to \text{\textlira}X,
$$
from which we may recovers the upper horizontal arrow in the above pullback square by passing to the underlying classical stacks. The factorization of the Frobenius through the non-Frobenius-untwisted negative topological cyclic homology from the proof of Proposition \ref{Prop Frob on homotopy} globalizes to even periodic spectral stacks and gives rise to a commutative diagram
$$
\begin{tikzcd}
\pounds X \arrow{d}{} \arrow{r}{}\arrow[bend left=20]{rr}{\varphi}&  \mathcal LX/\!/\T 
 \arrow{r}{u} \arrow{d}{} & \text{\textlira} X\arrow{d}{}  \\
\pounds \Spf(\Z_p) \arrow{r}{}\arrow[bend right=20]{rr}{\varphi}&  \mathcal L\Spf(\Z_p)/\!/\T 
 \arrow{r}{u}  & \text{\textlira} \Spf(\Z_p)  \\\end{tikzcd},
$$
whose inner left square is Cartesian (this follows from the construction of $\mathrm{TP}^{(-1)}$ as a localization of $\mathrm{TC}^-$). That is to say, it suffices to perform the modification, required of the coarse-rotation-quotiented even-periodized spherical loop space $\mathcal LX/\!/\T$ to produce the spherical Tate-loop space $\pounds X$, in the base-case $X=\Spf(\Z_p)$, and then extend it via pullback to an arbitrary quasi-syntomic $p$-adic formal scheme $X$.
\end{remark}

\begin{remark}
Let us describe the spherical Tate-loop space via transmutation.
The functor of points of $\pounds X :\CAlg_\mathrm{ad}^\mathrm{evp}\to\mathrm{Ani}$ may be written for any even periodic adic $\E$-ring $A$ as
$$
\mathrm{Map}_{\mathrm{FSpStk}^\mathrm{evp}}(\mathrm{Spf}(A),\pounds X) \,\, \simeq  \varinjlim_{\substack{\mathrm{TP}^{(-1)}(R)\to A\\ R\in\mathrm{CAlg}^\mathrm{qrsp}_{\mathbf Z_p}}} X(R).
$$
We suggest this might be viewed as in analogy to the description of the functor of points of the prismatization $X^{\mathbbl\Delta}$ \textit{as a $\delta$-stack}. Indeed,  for any $\delta$-ring $A$ we have
$$
\Map_{\delta\mathrm{Stk}^\heart}(\mathrm{Spec}(A), X^{\mathbbl\Delta})
 \,\, \simeq  \varinjlim_{\overline A\in \mathrm{Prism}(A)} X(\overline A),
$$
where $\mathrm{Prism}(A)$ is the collection of animated prism structures on $A$: generalized Cartier divisors $A\to \overline A$, satisfying some conditions\footnote{Applying the forgetful functor $\delta\mathrm{Stk}^\heart\to\mathrm{Stk}^\heart$ to the $\delta$-stack $X^{\mathbbl\Delta}$ corresponds to pre-composing with the right adjoint to the forgetful functor $\delta\mathrm{CAlg}\rightleftarrows \mathrm{CAlg} : \mathrm W$, given by Witt vectors. Since animated prism structures on $\mathrm W(R)$ are precisely Cartier-Witt divisors, this recovers the perhaps more familiar formula for $X^{\mathbbl\Delta}$}. For some connection between $\E$-rings and $\delta$-rings, specifically using the Nikolaus-Scholze Tate-Frobenius map $\varphi_\mathrm{NS} :A\to A^{\mathrm {tC}_p}$, see \cite{Nikolaus Group rings}. This suggests that a good notion of  a \textit{spectral prism} might allow the canonical map $\mathrm{TP}(R)^\wedge_p\to \mathrm{THH}(R)^{\mathrm{tC}_p}$ for $R\in\mathrm{CAlg}^\mathrm{qrsp}_{\mathbf Z_p}$ to be an instance of a spectral prism structure $A\to\overline A$, at least after appropriate Frobenius untwisting (e.g.\ perhaps a base-change along $\mathrm{TP}(R)^\wedge_p\to\mathrm{TP}^{(-1)}(R)$).
\end{remark}

In Theorem \ref{Main Theorem in Text} we have identified the underlying classical stacks of the even periodic stacks constructed in this section. 
It is rather simple to also identify
 their $\E$-rings of global functions in terms of the Frobenius-untwists of topological cyclic homology for quasi-syntomic $p$-adic formal schemes from Construction \ref{TC and TP for QSyn}.

\begin{prop}
Let $X$ be a  quasi-syntomic $p$-adic formal scheme $X$. There are canonical natural equivalences of $\E$-rings
$$
\sO(\begin{normalfont}\pounds \end{normalfont}X)\, \simeq\, \mathrm{TP}^{(-1)}(X),
\quad
\sO(\begin{normalfont}\text{\textlira}\end{normalfont}X) \, \simeq \, \mathrm{TC}^{-(-1)}(X),
\quad
\sO(\begin{normalfont}\$\end{normalfont}X) \,\simeq\, \mathrm{TC}(X)^\wedge_p.
$$
\end{prop}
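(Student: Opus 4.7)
The plan is to reduce each identification to the qrsp affine case via the colimit presentations of Remark \ref{Colimit presentation of pounds and lira}, using the basic fact that the global functions functor $\sO : \mathrm{FSpStk}^\mathrm{op}\to \CAlg$ sends colimits of formal spectral stacks to limits of $\E$-rings, and then matching the resulting limits against the defining limits of $\mathrm{TP}^{(-1)}(X)$, $\mathrm{TC}^{-(-1)}(X)$, $\mathrm{TC}(X)^\wedge_p$ from Construction \ref{TC and TP for QSyn} and Proposition \ref{Prop TC don't care II}.

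For the first equivalence, I would combine Remark \ref{Colimit presentation of pounds and lira} with the recollection that $\sO$ restricted to $\mathrm{FSpStk}^\mathrm{evp}$ takes formal affines to their defining $\E$-ring, to obtain
\[
\sO(\pounds X)\,\simeq\,\varprojlim_{R\in\mathrm{CAlg}^\mathrm{qrsp}_{\mathbf Z_p/X}}\sO\bigl(\Spf(\mathrm{TP}^{(-1)}(R))\bigr)\,\simeq\,\varprojlim_{R\in\mathrm{CAlg}^\mathrm{qrsp}_{\mathbf Z_p/X}}\mathrm{TP}^{(-1)}(R),
\]
which is precisely $\mathrm{TP}^{(-1)}(X)$ by Remark \ref{Colimit presentation of TP and TC}.

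For the second equivalence, the same argument applies except that we must first check $\sO\bigl(\Spf(\mathrm{TC}^{-(-1)}(R))^\mathrm{evp}\bigr)\simeq \mathrm{TC}^{-(-1)}(R)$ for every qrsp $R$. This is where the only genuine content lies: by Lemma \ref{Lemma evp for TC^-} the even periodization is presented as $\Spf(\mathrm{TC}^{-(-1)}(R)\otimes_{\mathrm{MU}}\mathrm{MUP})/\mathbf G_m$, and passing to global functions turns the $\mathbf G_m$-quotient into homotopy fixed-points, which pick out the weight-zero piece of the $\mathbf Z$-grading $\mathrm{TC}^{-(-1)}(R)\otimes_{\mathrm{MU}}\mathrm{MUP}\simeq\bigoplus_{i\in \mathbf Z}\Sigma^{-2i}\mathrm{TC}^{-(-1)}(R)$, namely $\mathrm{TC}^{-(-1)}(R)$ itself. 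I expect this is the main (modest) obstacle; once in place, the colimit argument gives $\sO(\text{\textlira}X)\simeq\mathrm{TC}^{-(-1)}(X)$.

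For the third equivalence, I would apply $\sO$ to the defining coequalizer presentation of $\$X$. Since $\sO$ turns colimits into limits, this yields an equalizer of $\E$-rings, or equivalently the fiber
\[
\sO(\$X)\,\simeq\,\mathrm{fib}\bigl(\sO(\text{\textlira}X)\xrightarrow{\varphi\,-\,\mathrm{can}}\sO(\pounds X)\bigr)\,\simeq\,\mathrm{fib}\bigl(\mathrm{TC}^{-(-1)}(X)\xrightarrow{\varphi\,-\,\mathrm{can}}\mathrm{TP}^{(-1)}(X)\bigr),
\]
where the second equivalence uses the first two identifications together with the naturality of the maps $\varphi$ and $\mathrm{can}$ established in Constructions \ref{Cons untwisted TC^-} and \ref{Cons of div Frob} and extended to quasi-syntomic formal schemes after Construction \ref{TC and TP for QSyn}. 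By Proposition \ref{Prop TC don't care II}, this fiber is exactly $\mathrm{TC}(X)^\wedge_p$, completing the proof.
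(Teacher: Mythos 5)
Your proposal is correct and follows essentially the same route as the paper's proof: reduce to qrsp affines via the colimit presentations, read off $\sO$ of the formal affine for $\pounds$, use the $\mathrm{MUP}$-presentation of the even periodization and the weight-zero piece of the $\mathbf G_m$-quotient for \textlira, and identify $\sO(\$X)$ as the equalizer handled by Proposition \ref{Prop TC don't care II}. No gaps.
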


\begin{proof}
Since quasi-syntomic $p$-adic formal schemes are generated under colimits by qrsp affines, and all the functors in questions take all small colimits in $X$ to the corresponding limits of $\E$-rings, it suffices to restrict to the case $X=\Spf(R)$ for a qrsp ring $R$. In that case. the spherical Tate-loop and Nygaard-loop spaces taken on the simple form
$$
\pounds \Spf(R)^{\mathbbl\Delta}\,\simeq\, \Spf\left(\mathrm{TP}^{(-1)}(R)\right),
\quad
\pounds \Spf(R)^{\mathcal N}\,\simeq\, \Spf\left(\mathrm{TC}^{-(-1)}(R)\otimes_{\mathrm{MU}}\mathrm{MUP}\right)/\mathbf G_m,
$$
from which it is immediate that $\sO\left(
\pounds \Spf(R)^{\mathbbl\Delta}
\right)\simeq \mathrm{TP}^{(-1)}(R)$, and since $\mathbf G_m$-fixed points correspond in terms of gradings to taking the $0$-th graded piece, 
we also get
$$
\sO\left(\Spf(R)^{\mathcal N}\right)
\,\simeq \,\left(\mathrm{TC}^{-(-1)}(R)\otimes_{\mathrm{MU}}\mathrm{MUP}\right)^{\G_m}\,\simeq \, \mathrm{TC}^{-(-1)}(R).
$$
For the spherical syntomification, we find an equivalence of $\E$-rings
$$
\sO\left(\Spf(R)^{\mathrm{Syn}}\right)\,\simeq\,\mathrm{eq}
\big(\xymatrix{ \mathrm{TC}^{(-1)}(X) \ar@<0.6ex>[r]^{\varphi} \ar@<-0.6ex>[r]_{\mathrm{can}} & \mathrm{TP}^{(-1)}(X)}\big)\,\simeq\, \mathrm{TC}(X)^\wedge_p,
$$
where the second equivalence comes from Proposition \ref{Prop TC don't care II}.
\end{proof}

\section{Recovering structure on prismatization}
Theorem \ref{Main Theorem in Text} shows that the even periodic stacks $\pounds X$, $\text{\textlira}X$, and $\$X$ are  spectral enhancements of the  prismatization stacks $X^{\mathbbl\Delta}$, $X^{\mathcal N}$, and $X^{\mathrm{Syn}}$ for any quasi-syntomic $p$-adic formal scheme $X$. We now show how various structure on the prismatization stacks may be viewed as arising from these even periodic enhancements. 

\begin{cons}\label{Cons of line bundle}
The quasi-coherent sheaf $\Sigma^{-2i}(\sO_{\mathfrak X})$ is a line bundle  for any even periodic formal spectral stack $\mathfrak X$ and all $i\in \mathbf Z$. By passing to $\pi_0$, we obtain a multiplicative family of line bundles $\pi_{2i}(\sO_{\mathfrak X})$
on the underlying classical stack $\mathfrak X^\heart$.
 \end{cons}

 On the other hand, the prismatization stacks also carry a canonical family of line bundles $\sO_{X^{\mathbbl \Delta}}\{i\}$ (and similarly for $X^{\mathcal N}$ and $X^\mathrm{Syn}$), called the \textit{Breuil-Kisin twists}. Since they are multiplicative, they are determined from the special case $i=-1$, and since they are sheaves, we it suffices to assume that $X=\Spf(R)$ for a qrsp ring $R$. By a special case of \cite[Lemma 9.1.4]{BLa} (see also \cite[Proposition 3.5]{Mondal}), the relevant Breuil-Kisin twist is in this case given as the second prismatic cohomology of the projective line
 $$
\mathbbl\Delta_R\{-1\}\,\simeq\, \mathrm H^2_{\mathbbl\Delta}(\mathbf P^1_R).
 $$

\begin{remark}
 The line bundles of Construction \ref{Cons of line bundle} may be described similarly.
Since the quasi-coherent sheaves $\pi_{2i}(\sO_{\mathfrak X})$ on $\mathfrak X^\heart$ are multiplicative and invertible, they satisfy $\pi_{2i}(\sO_{\mathfrak X})\simeq \pi_{-2}(\sO_{\mathfrak X})^{\otimes -i}$, and are as such already fully determined by the special case $i=-1$. The line bundle $\pi_{-2}(\sO_{\mathfrak X})$ is a sheaf on $\mathfrak X^\heart$, therefore it suffices by descent to  specify it in the special case of an even periodic formal affine $\mathfrak X = \Spf(A)$.
In that case, the corresponding $\pi_0(A)$-module may be viewed as the second cohomology of the complex projective line
$$
\pi_{-2}(A)\,\simeq \, A^2(\mathbf{CP}^1),
$$
where on the right, we have identified the spectrum $A$ with the cohomology theory that it represents.
\end{remark}

This formal resemblance suggests a relationship between the homotopy sheaves on the even periodic spectral stacks $\pounds X$, $\text{\textlira}X$, and $\$X$ on the one side, and the Breuil-Kisin twists on their underlying classical stacks $X^{\mathbbl\Delta},$ $X^\mathcal N$, and $X^\mathrm{Syn}$ on the other. As consequence of Theorem \ref{Main Theorem in Text}, we can make this precise.

\begin{prop}\label{Prop BK twists = homotopy groups}
For any $i\in \mathbf Z$, the $i$-th Breuil-Kisin twists on the prismatization stacks may be identified with the line bundles
$$\pi_{2i}(\mathcal O_{\begin{normalfont}\pounds\end{normalfont}X})\,\simeq \, \mathcal O_{X^{\mathbbl\Delta}}\{i\},
\quad
\pi_{2i}(\mathcal O_{\begin{normalfont}\text{\textlira}\end{normalfont}X})\,\simeq \,
\mathcal O_{X^{\mathcal N}}\{i\},
\quad
\pi_{2i}(\mathcal O_{\begin{normalfont}\$\end{normalfont}X})
\,\simeq \, \mathcal O_{X^{\mathrm{Syn}}}\{i\}.$$
\end{prop}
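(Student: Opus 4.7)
The plan is to reduce each of the three identifications to the qrsp affine case by quasi-syntomic descent, then extract the result from the explicit homotopy-group computations in Section \ref{Section 1}. Both sides are quasi-coherent sheaves on the underlying classical stacks, which by Theorem \ref{Main Theorem in Text} are colimits over qrsp affines; it therefore suffices to produce canonical, functorial identifications on the level of affine opens $\Spf(R)$ for $R$ a qrsp ring. Since the claimed identifications are multiplicative (both the BK twists and the $\pi_{2i}$'s satisfy $(\cdot)\{i\} \simeq (\cdot)\{-1\}^{\otimes -i}$ and $\pi_{2i} \simeq \pi_{-2}^{\otimes -i}$), there is technically no loss in handling a single $i$, but it is easier to carry an arbitrary $i$ throughout.

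For the spherical Tate-loop space, the equivalence $\pounds \Spf(R) \simeq \Spf(\mathrm{TP}^{(-1)}(R))$ reduces the statement to the identification $\pi_{2i}(\mathrm{TP}^{(-1)}(R)) \simeq \mathbbl\Delta_R\{i\}$ of Theorem \ref{Theorem Frobenius untwist for TP}, which already produces the Breuil-Kisin twist by design. For the spherical Nygaard-loop space, Lemma \ref{Lemma evp for TC^-} presents $\text{\textlira}\Spf(R)$ as $\Spf(\mathrm{TC}^{-(-1)}(R) \otimes_{\mathrm{MU}} \mathrm{MUP})/\mathbf G_m$, and the $\mathbf G_m$-quotient of an even periodic affine organizes its homotopy shifts as weight components of the grading. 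First I would match $\pi_{2i}$ of the structure sheaf with the weight-$i$ graded piece of $\pi_0$, which by Proposition \ref{Prop homotopy groups of untiwsted TC^-} is exactly $\mathrm{Fil}^i_\mathcal N\,\mathbbl\Delta_R\{i\}$; this is the $i$-th BK twist on $\Spf(R)^{\mathcal N}$ in the Rees presentation recalled in \eqref{Prism stacks via qsyn desc}.

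For the spherical syntomification $\$X$, I would use its definition as the coequalizer of $\mathrm{can}, \varphi: \text{\textlira}X \rightrightarrows \pounds X$. The induced description of its quasi-coherent sheaves is as the equalizer $\QCoh(\$X) \simeq \mathrm{eq}(\QCoh(\text{\textlira}X) \rightrightarrows \QCoh(\pounds X))$ with respect to $\mathrm{can}^*$ and $\varphi^*$. On the other side, the Breuil-Kisin twists on $X^{\mathrm{Syn}}$ are defined by gluing the twists on $X^{\mathcal N}$ and $X^{\mathbbl\Delta}$ along $j_{\mathrm{dR}}$ and $j_{\mathrm{HT}}$, per the definition of syntomification in \cite[Definition 6.1.1]{Bhatt F-gauges}. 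Combining the identifications already established for $\pounds X$ and $\text{\textlira}X$ with $\mathrm{can}^\heart \simeq j_{\mathrm{dR}}$ and $\varphi^\heart \simeq j_{\mathrm{HT}}$ from the proof of Theorem \ref{Main Theorem in Text}, but now applied on $\pi_{2i}$ rather than $\pi_0$ via Proposition \ref{Prop Frob on homotopy}, produces the third identification.

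I expect the main technical obstacle to be the bookkeeping of the $\mathbf G_m$-grading on $\text{\textlira}X$: one must check that the grading shift by $i$ on the Rees algebra of the Nygaard filtration corresponds to the Serre twist by $i$ on the quotient stack, and that under the $\mathrm{MU} \to \mathrm{MUP}$ base-change of Lemma \ref{Lemma evp for TC^-} the internal grading of $\mathrm{MUP}$ aligns with the homotopy index, so that $\pi_{2i}(\mathcal O_{\text{\textlira}X})$ really recovers the $i$-th Breuil-Kisin twist rather than the $(-i)$-th one or a shifted tensor power. All three identifications ought then to propagate up from qrsp affines to general quasi-syntomic $X$ by the evident naturality, since both sides are visibly quasi-coherent sheaves on stacks satisfying quasi-syntomic descent.
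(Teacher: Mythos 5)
Your proposal is correct and follows essentially the same route as the paper, whose proof is just a terse appeal to the computations $\pi_{2i}(\mathrm{TP}^{(-1)}(R))\simeq\mathbbl\Delta_R\{i\}$ and $\pi_{2i}(\mathrm{TC}^{-(-1)}(R))\simeq\mathrm{Fil}^i_{\mathcal N}\mathbbl\Delta_R\{i\}$ from Theorem \ref{Theorem Frobenius untwist for TP} and Proposition \ref{Prop homotopy groups of untiwsted TC^-}, together with the identification of $\pi_{2i}(\varphi)$ and $\pi_{2i}(\mathrm{can})$ from Proposition \ref{Prop Frob on homotopy}, all fed through the descent argument of Theorem \ref{Main Theorem in Text} exactly as you describe. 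The only blemish is that you write the coequalizer as $\text{\textlira}X\rightrightarrows\pounds X$ rather than $\pounds X\rightrightarrows\text{\textlira}X$, but the $\QCoh$ equalizer you then use is the correct one, so nothing is affected.
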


\begin{proof}
In light of the proof of Theorem \ref{Main Theorem in Text}, this follows  from the appearance of the Breuil-Kisin twists $\{i\}$ in the computations of the homotopy groups  $\pi_{2i}\left(\mathrm{TP}^{(-1)}\right)$ and $\pi_{2i}\left(\mathrm{TC}^{-(-1)}\right)$ from
Theorem \ref{Theorem Frobenius untwist for TP}, Proposition \ref{Prop homotopy groups of untiwsted TC^-}, and of the induced maps $\pi_{2i}(\varphi)$ and $\pi_{2i}(\mathrm{can})$ from
Proposition \ref{Prop Frob on homotopy}.
\end{proof}

This identification of the Breuil-Kisin twists admittedly comes as no surprise. It is already apparent from the Bhatt-Morrow-Scholze computation of \cite{BMS2}, recalled here as Theorem \ref{Thm BMS2 iso}. The same is less true for the identification between the two formal groups (always understood to be commutative, smooth, and 1-dimensional) on the prismatization stacks, established in Proposition \ref{Prop Drinfeld is Quillen} below, and extending the main result of \cite{Deven}.

\begin{cons}
By \cite[Corollary 4.3.8]{Even periodization}, the even periodization of the sphere spectrum $\Spec(\mathbf S)^\mathrm{evp}$ may be identified with the \textit{chromatic base spectrum} $\mM$. This spectral stack, studied in \cite{ChromaticCartoon} under the name $\mM{}^\mathrm{or}_\mathrm{FG}$, has the classical moduli stack of formal groups $\mM{}^\heart_\mathrm{FG}$ as its underlying classical stack. For any even periodic formal spectral stack $\mathfrak X$, there is therefore a canonical map
$$
\mathfrak X^\heart\,\simeq\, (\mathfrak X^\mathrm{evp})^\heart\to (\Spec(\mathbf S)^\mathrm{evp})^\heart\,\simeq \, \mM^\heart\,\simeq \, \mM{}^\heart_\mathrm{FG}.
$$
This map classifyies a canonical formal group $\w{\G}{}^{\mathcal Q_0}_{\mathfrak X}$ on the classical stack $\mathfrak X^\heart$, called the \textit{classical Quillen formal group of $\mathfrak X$}.
\end{cons}

In \cite{Drinfeld FG}, Drinfeld  introduced\footnote{Technically, Drinfeld only exhibited the formal group in \cite{Drinfeld FG}  over the stacks $X^{\mathbbl\Delta}$ and $X^{\mathcal N}$. But as explained in {\cite[Corollary 4.8]{Deven}} and {\cite[Remark 6.10]{Mathew-Mondal}}, the formal group descends along the \'etale cover $X^\mathcal N\to X^{\mathrm{Syn}}$ to a formal group over the syntomification as well.} a formal group over the prismatization stacks $X^{\mathbbl\Delta}$ (and similarly for $X^{\mathcal N}$ and $X^\mathrm{Syn}$), which we will denote by $\w{\mathbf G}{}^{\mathrm{Dr}}_{X^{\mathbbl\Delta}}$.
By unpacking \cite[Proposition 6.9]{Mathew-Mondal} and invoking quasi-syntomic descent, we find that this formal group may be defined by taking it in the case of a qrsp affine $X=\Spf(R)$ to be the classical formal  stack
$$
\w{\mathbf G}{}^{\mathrm{Dr}}_{X^\mathbbl\Delta} \, \simeq \, \Spf \left(\mathrm R\Gamma_{\mathbbl\Delta}(\mathrm B\mathbf G_{m, R})\right)\vert_{\CAlg_{\mathrm{ad}, R}^\heart}
$$
over $\Spf(R)^{\mathbbl\Delta} \simeq \Spf(\mathbbl\Delta_R)$.
Its strict abelian group structure is induced from that of the classifying stack $\mathrm B\mathbf G_m$, which may be viewed equivalently as coming from the strict abelian group structure on the multiplicative group $\mathbf G_m$, or as classifying the tensor product structure of line bundles.

\begin{remark}
The classical Quillen formal group of an even periodic formally affine stack $\mathfrak X = \Spf(A)$ may be written in a somewhat reminiscent form as
$$
\w{\G}{}^{\mathcal Q_0}_{\Spf(A)}\, \simeq \,\Spf(A^0(\mathrm{B}\mathbf C^\times)),
$$
where we once again identify  $A$ with the cohomology theory that it represents.
\end{remark}

\begin{remark}
The formal group structure on $\w{\G}{}^{\mathrm{Dr}}_{\Spf(R)^{\mathbbl\Delta}}$ encodes the addition law for the tensor product of algebraic line bundles on $R$-schemes under the first Chern class $c_1^{\mathbbl\Delta}$ of \cite[Section 7.5]{BLa}. Likewise, the classical Quillen formal group $\w{\G}{}^{\mathcal Q_0}_A$ for an even $\E$-ring $A$  encodes the addition law for the tensor product of topological line bundles under the first Chern class $c_1^A$, whose existence follows from the evenness (and therefore complex-orientability) of $A$.
\end{remark}

\begin{prop}\label{Prop Drinfeld is Quillen}
Let $X$ be a quasi-syntomic $p$-adic formal scheme $X$. There are canonical equivalences of formal groups
$$
\widehat{\mathbf G}{}^{\mathcal Q_0}_{\begin{normalfont}\pounds X\end{normalfont}}
\,\simeq\,
\widehat{\mathbf G}{}^{\mathrm{Dr}}_{X^{\mathbbl\Delta}},
\qquad
\widehat{\mathbf G}{}^{\mathcal Q_0}_{\begin{normalfont}\text{\textlira} X\end{normalfont}}
\,\simeq\,
\widehat{\mathbf G}{}^{\mathrm{Dr}}_{X^{\mathcal N}},
\qquad
\widehat{\mathbf G}{}^{\mathcal Q_0}_{\begin{normalfont}\$ X\end{normalfont}}
\,\simeq\,
\widehat{\mathbf G}{}^{\mathrm{Dr}}_{X^{\mathrm{Syn}}},
$$
over the respective classical stacks $X^{\mathbbl\Delta}$, $X^{\mathcal N},$ and $X^{\mathrm{Syn}}$.
\end{prop}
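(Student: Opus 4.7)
The plan is to reduce each of the three asserted equivalences to the qrsp affine case via quasi-syntomic descent, where the first equivalence is precisely the main theorem of Manam from \cite{Deven}, and the remaining two are obtained from it by tracking the grading that installs the Nygaard filtration, together with \'etale descent along $X^{\mathcal N} \to X^{\mathrm{Syn}}$.

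Both sides of each asserted equivalence satisfy quasi-syntomic descent in $X$: the Drinfeld formal groups do so by their construction, per \cite{Mathew-Mondal}, while the classical Quillen formal groups $\widehat{\mathbf G}{}^{\mathcal Q_0}_{\pounds X}, \widehat{\mathbf G}{}^{\mathcal Q_0}_{\text{\textlira}X}, \widehat{\mathbf G}{}^{\mathcal Q_0}_{\$X}$ inherit descent from the colimit presentations of Remark \ref{Colimit presentation of pounds and lira}, using that $\mathfrak X\mapsto \widehat{\mathbf G}{}^{\mathcal Q_0}_{\mathfrak X}$ is functorial under pullback and that $\mathfrak X\mapsto \mathfrak X^\heart$ commutes with small colimits of formal spectral stacks. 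Reducing to $X=\Spf(R)$ with $R$ qrsp, the first equivalence becomes the identification of $\Spf\bigl(\mathrm{TP}^{(-1)}(R)^0(\mathrm B\mathbf C^\times)\bigr)$ with $\Spf\bigl(\mathrm R\Gamma_{\mathbbl\Delta}(\mathrm B\mathbf G_{m,R})\bigr)\simeq \widehat{\mathbf G}{}^{\mathrm{Dr}}_{\Spf(R)^{\mathbbl\Delta}}$, matching the $H$-space structure on $\mathrm B\mathbf C^\times$ with the multiplicative group structure on $\mathrm B\mathbf G_{m,R}$; this is exactly Manam's theorem in \cite{Deven}.

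For $\text{\textlira}\Spf(R)$, I would exploit the description from Lemma \ref{Lemma evp for TC^-} as a $\mathbf G_m$-quotient of $\Spf(\mathrm{TC}^{-(-1)}(R)\otimes_{\mathrm{MU}}\mathrm{MUP})$, whose homotopy rings are the Rees algebra $\bigoplus_i \mathrm{Fil}^i_{\mathcal N}\,\mathbbl\Delta_R\{i\}$ by Proposition \ref{Prop homotopy groups of untiwsted TC^-}. Applying Manam's identification grading-by-grading and then descending through the $\mathbf G_m$-quotient produces the formal group associated to the Rees algebra of the Nygaard-filtered $\mathrm R\Gamma_{\mathbbl\Delta}(\mathrm B\mathbf G_{m,R})$, which is by construction $\widehat{\mathbf G}{}^{\mathrm{Dr}}_{\Spf(R)^{\mathcal N}}$. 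For $\$\Spf(R)$, the remaining equivalence then follows formally: since $\$X$ is the coequalizer of $\pounds X \rightrightarrows \text{\textlira}X$, its underlying classical stack recovers the \'etale cover $X^{\mathcal N}\to X^{\mathrm{Syn}}$ (per Theorem \ref{Main Theorem in Text}), along which both the Quillen formal group descends by functoriality and the Drinfeld formal group descends by \cite[Corollary 4.8]{Deven}.

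The hard part will be the grading-refined version of Manam's identification needed for the $\text{\textlira}$ case: one must check that Manam's matching is sufficiently natural to be compatible with the $\Sigma^{-2i}$-decomposition of $\mathrm{TC}^{-(-1)}(R)\otimes_{\mathrm{MU}}\mathrm{MUP}$, so that the individual graded pieces match the individual Nygaard-filtration pieces of $\mathrm R\Gamma_{\mathbbl\Delta}(\mathrm B\mathbf G_{m,R})$ rather than merely their unfiltered limit. This compatibility should be forced by the Cartesian square of Construction \ref{Cons untwisted TC^-} defining $\mathrm{TC}^{-(-1)}$, which is precisely what encodes the Nygaard filtration as the pullback of completions into the Frobenius-untwisted construction; but verifying the full formal-group-theoretic compatibility, including preservation of the coproduct induced by the $H$-space structure on $\mathrm B\mathbf C^\times$, is where the genuine work lies.
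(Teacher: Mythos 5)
Your reduction to qrsp affines and your treatment of the first equivalence (the $\pounds X$ case via Manam's \cite[Theorem 4.7]{Deven}) match the paper. But your order of deduction is $\pounds \Rightarrow \text{\textlira} \Rightarrow \$$, and the linchpin of that order --- the ``grading-refined version of Manam's identification'' for $\text{\textlira}\Spf(R)$ --- is exactly the step you leave unproven. This is a genuine gap, not a routine verification: Manam's theorem identifies a formal group over $\Spf(\mathbbl\Delta_R)$, and there is no evident sense in which it can be applied ``grading-by-grading'' to the Rees algebra $\bigoplus_i \mathrm{Fil}^i_{\mathcal N}\,\mathbbl\Delta_R\{i\}$; the formal group over $\Spf(R)^{\mathcal N}$ is not assembled from graded pieces of the one over $\Spf(R)^{\mathbbl\Delta}$, and the compatibility of the $H$-space coproduct with the $\Sigma^{-2i}$-decomposition that you flag as ``where the genuine work lies'' is precisely what you would need to supply. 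A second, smaller gap: for the $\$X$ case you invoke descent along the \'etale cover $X^{\mathcal N}\to X^{\mathrm{Syn}}$, but knowing that both formal groups pull back to isomorphic formal groups on $X^{\mathcal N}$ does not identify them on $X^{\mathrm{Syn}}$ unless the isomorphism is shown to be compatible with the descent data.

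The paper avoids both issues by reversing the order: $\pounds \Rightarrow \$ \Rightarrow \text{\textlira}$. Since the Drinfeld formal group on $X^{?}$ is pulled back from $\Spf(\Z_p)^{?}$, one reduces to $X=\Spf(\Z_p)$ and then uses the uniqueness statement of \cite[Corollary 4.8]{Deven}: a formal group on $\Spf(\Z_p)^{\mathrm{Syn}}$ is determined by (a) its pullback to $\Spf(\Z_p)^{\mathbbl\Delta}$ being the Drinfeld formal group and (b) its dualizing line being the Breuil--Kisin twist $\sO\{1\}$. Condition (a) for $\widehat{\mathbf G}{}^{\mathcal Q_0}_{\$\Spf(\Z_p)}$ follows from the already-established $\pounds$ case together with the fact that all maps to the chromatic base stack $\mM$ commute (so classical Quillen formal groups pull back along $\Spf(\Z_p)^{\mathbbl\Delta}\to\Spf(\Z_p)^{\mathrm{Syn}}$), and condition (b) follows from $\omega_{\widehat{\mathbf G}{}^{\mathcal Q_0}_{\mathfrak X}}\simeq \pi_2(\sO_{\mathfrak X})$ together with Proposition \ref{Prop BK twists = homotopy groups}. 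The $\text{\textlira}$ case is then obtained for free by pulling back along $X^{\mathcal N}\to X^{\mathrm{Syn}}$, since both formal groups are compatible with that map. If you want to salvage your route, you would need to actually prove the filtered refinement of Manam's theorem; otherwise I recommend adopting the uniqueness argument, which replaces the hard computation by two soft verifications.
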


\begin{proof}
By quasi-syntomic descent, it suffices to restrict to the case when $X=\Spf(R)$ for a qrsp ring $R$. In that case, \cite[Theorem 4.7]{Deven} proves the desired result identifyng the classical Quillen formal group of $\pounds X$ and the Drinfeld formal group over $X^{\mathbbl\Delta}$.

On the other hand,  the Drinfeld formal group is defined by pullback along the canonical maps $X^?\to \Spf(\mathbf Z_p)^?$ for $?\in\{\mathbbl\Delta, \mathcal N, \mathrm{Syn}\}$, so it alternatively suffices to assume that $X=\Spf(\mathbf Z_p)$. By \cite[Corollary 4.8]{Deven}, there exists an essentially unique formal group $\widehat{\mathbf G}$ on $\Spf(\mathbf Z_p)^\mathrm{Syn}$ which satisfies both that
\begin{enumerate}[label =(\alph*)]
\item \label{a}The pullback of $\w{\G}$ to $\Spf(\mathbf Z_p)^{\mathbbl\Delta}$ recovers the Drinfled formal group.
\item \label{b}The dualizing line $\omega_{\widehat{\mathbf G}}$, i.e.\ the module of invariant differentials on the formal group $\w{\G}$, coincides with the Breuil-Kisin twist $\sO\{1\}$.
\end{enumerate}
The Drinfeld formal group on syntomification $\w{\G}{}^\mathrm{Dr}_{\mathrm{Spf}(\Z_p)^\mathrm{Syn}}$ certainly satisfies both of these conditions. It suffices to show that the classical Quillen formal group $\w{\G}{}^{\mathcal Q_0}_{\$\mathrm{Spf}(\Z_p)}$ does as well.

For \ref{a}, note that the map from an even periodic spectral stack to the chromatic base stack is essentially unique, so that the diagram of formal spectral stacks
$$
\begin{tikzcd}
\pounds \Spf(\Z_p) \arrow{dr}{}\arrow{rr}{} &  & \$\Spf(\Z_p)\arrow{dl}{}\\
& \mM &
\end{tikzcd}
$$
commutes. By passing to the underlying classical stacks, we obtain by Theorem \ref{Main Theorem in Text} a commuting diagram of classical stacks
$$
\begin{tikzcd}
 \Spf(\Z_p){}^{\mathbbl\Delta} \arrow{dr}{}\arrow{rr}{} &  & \Spf(\Z_p)^{\mathrm{Syn}}.\arrow{dl}{}\\
& \mM{}_\mathrm{FG}^\heart &
\end{tikzcd}
$$
Since the non-horizontal maps each classify their respective classical Quillen formal group, it follows that said formal groups are related by pullback along the horizontal map. Because we already know that $\widehat{\mathbf G}{}^{\mathcal Q_0}_{\begin{normalfont}\pounds \Spf(\Z_p)\end{normalfont}}
\simeq
\widehat{\mathbf G}{}^{\mathrm{Dr}}_{\Spf(\Z_p)^{\mathbbl\Delta}}$, it follows that the classical Quillen formal group of $\$\Spf(\Z_p)$ satisfies condition \ref{a}.

To verify \ref{b}, let us first observe that the dualizing line of a Quillen formal group is always given by 
$$
\omega_{\w{\G}{}^{\mathcal Q_0}_{\mathfrak X}}\,\simeq \, \pi_2(\sO_{\mathfrak X}).
$$
This is a classical comutation in the even periodic affine case, from which it may be deduced via flat descent, or it can also be shown directly by a computation in terms of the (non-classical, i.e.\ oriented) Quillen formal group \cite[Example 4.2.19]{Elliptic 2}. By specializing  to the case $\mathfrak X=\$\Spf(\Z_p)$, the identification between the Breuil-Kisin twists and homotopy sheaves of Proposition \ref{Prop BK twists = homotopy groups} then shows that the formal group $\widehat{\mathbf G}{}^{\mathcal Q_0}_{\begin{normalfont}\$ \Spf(\Z_p)\end{normalfont}}$ satisfies \ref{b}, and is therefore isomorphic to the Drinfeld formal group over the syntomification of $\Spf(\Z_p)$.

The remaining identification between the classical Drinfeld formal group of $\text{\textlira}X$ and the Drinfeld formal group over $X^\mathcal N$ follows from the already proven equivalence for $\$X$ and $X^\mathrm{Syn}$. That is because one the one hand, the map of even periodic spectral stacks $\text{\textlira}X\to \$X$ lives over $\mM$, and so by the same argument as before the classical Quillen fromal group from $\$X$ pulls back along the map of underlying classical stacks $X^{\mathcal N}\to X^\mathrm{Syn}$ to the classical Quillen formal group of $\text{\textlira X}$. On the other hand though, the Drinfeld formal group also pulls back along $X^{\mathcal N}\to X^\mathrm{Syn}$ by construction.
\end{proof}

\appendix
\section{Nygaard-decompletion via animation}

In Section \ref{Section 1}, we presented an account of how to \textit{Nygaard-decomplete} the variants of topological cyclic homology $\mathrm{TP}^\wedge_p$ and $(\mathrm{TC}^-)^\wedge_p$. We constructed these decompletions $\mathrm{TP}^{(-1)}$ and $\mathrm{TC}^{-(-1)}$ on the level of qrsp rings by localization before applying a Frobenius map. As such, we also view them as \textit{Frobenius-untwists}. They were then extended further by quasi-syntomic descent.
In this appendix, we  present a different approach to Nygaard decompleting $\mathrm{TP}^\wedge_p$ and $(\mathrm{TC}^-)^\wedge_p$, that we learned from Deven Manam. In place of localization and descent, this approach relies on \textit{animation}.

\begin{remark}
The idea of animation is a modern interpretation of Quillen's idea of the \textit{nonabelian derived category}. It was established in the $\i$-categorical setting by \cite[Section 5.5.8]{HTT}, and  dubbed its present name in 
\cite{Animation} by analogy that \textit{animation} endows a category with a homotopical \textit{soul} --  \textit{anima} in Latin. Formally, animation may be defined as
$\mathrm{Ani}(\mathcal C) :=\mathcal P_\Sigma(\mathcal C^\mathrm{cp})$, that is to say, it it formally adjoins all sifted colimits to the subcategory $\mathcal C^\mathrm{cp}\subseteq\mathcal C$ of the compact projective objects in $\mathcal C$.
\end{remark}

The idea is to obtain the Nygaard-decompletions $\mathrm{TP}^\mathrm{ani}$ and $\mathrm{TC}^{-\mathrm{ani}}$ by animating the functors $\mathrm{TP}^\wedge_p$ and $(\mathrm{TC}^-)^\wedge_p$. In order for this to produce the desired results, some care needs to be taken to specify enough structure to keep track of during animation. More precisely, we must keep track of the \textit{motivic filtrations} on the variants of topological cyclic homology, which had been first defined in \cite{BMS2}, and re-interpreted as special cases of the even filtration in \cite{HRW}:

\begin{theorem}[{\cite[Theorem 1.12]{BMS2}}]\label{Thm motivic filtrations exist}
Let $R$ be a quasi-syntomic $\Z_p$-algebra. There exist canonical complete filtered $\E$-ring 
$\mathrm{fil}_{\mathrm{mot}}^*(\mathrm{TP}(R)^\wedge_p)$ and $\mathrm{fil}_{\mathrm{mot}}^*(\mathrm{TC}^-(R)^\wedge_p)$, functorial in $R$, such that:
\begin{enumerate}[label =(\roman*)]
\item On the associated graded objects, they recover prismatic cohomology
$$
\mathrm{gr}_{\mathrm{mot}}^i(\mathrm{TP}(R)^\wedge_p)\,\simeq\,\widehat{\mathbbl\Delta}_R\{i\}[2i],
\quad
\mathrm{gr}_{\mathrm{mot}}^i(\mathrm{TC}^-(R)^\wedge_p)\,\simeq\,\mathrm{Fil}_{\mathcal N}^i\,\widehat{\mathbbl\Delta}_R\{i\}[2i].
$$
\item If $R$ is qrsp, they recover the Postnikov filtrations
$$
\mathrm{fil}_{\mathrm{mot}}^i(\mathrm{TP}(R)^\wedge_p)\,\simeq\,\tau_{\ge 2i}(\mathrm{TP}(R)^\wedge_p),
\quad
\mathrm{fil}_{\mathrm{mot}}^i(\mathrm{TC}^-(R)^\wedge_p)\,\simeq\,\tau_{\ge 2i}(\mathrm{TC}^-(R)^\wedge_p).
$$
\end{enumerate}
\end{theorem}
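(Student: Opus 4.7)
The plan is to first define the motivic filtrations on qrsp rings using the double-speed Postnikov filtration, and then extend to arbitrary quasi-syntomic $\mathbf Z_p$-algebras by right Kan extension along the qsyn-embedding. By Theorem \ref{Thm BMS2 iso}, for any qrsp $R$ the $\E$-rings $\mathrm{TP}(R)^\wedge_p$ and $\mathrm{TC}^-(R)^\wedge_p$ are even, so their double-speed Postnikov filtrations
\[
\mathrm{fil}_{\mathrm{mot}}^i(\mathrm{TP}(R)^\wedge_p) := \tau_{\ge 2i}(\mathrm{TP}(R)^\wedge_p), \qquad \mathrm{fil}_{\mathrm{mot}}^i(\mathrm{TC}^-(R)^\wedge_p) := \tau_{\ge 2i}(\mathrm{TC}^-(R)^\wedge_p)
\]
are complete with $\mathrm{gr}_{\mathrm{mot}}^i \simeq \pi_{2i}(-)[2i]$. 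Condition (ii) then holds by construction, while condition (i) follows immediately from the homotopy-group identifications in Theorem \ref{Thm BMS2 iso}.

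For the extension to quasi-syntomic $\mathbf Z_p$-algebras, I would right Kan extend along the embedding $\mathrm{CAlg}^{\mathrm{qrsp}}_{\mathbf Z_p} \hookrightarrow \mathrm{CAlg}^{\mathrm{qsyn}}_{\mathbf Z_p}$. Since every quasi-syntomic $R$ admits a quasi-syntomic cover by qrsp rings, the right Kan extension at $R$ is concretely computed as the totalization over the Čech nerve of such a cover, which is a well-defined complete filtered $\E$-ring by functoriality of $\mathrm{fil}_{\mathrm{mot}}^*$ in the qrsp case. To verify condition (i) in this generality, I would argue that the associated-graded functor commutes with the Čech totalization (using that it is a finite limit-colimit operation on complete filtrations), reducing the claim to the quasi-syntomic descent of Nygaard-completed prismatic cohomology together with its Nygaard filtration, which is already established in \cite{BMS2}.

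The main technical obstacle is to ensure completeness of the resulting filtration and the commutation of $\mathrm{gr}^i$ with $\lim$ over the Čech nerve: in general, limits of complete filtered objects need not remain complete, and swapping with associated gradeds requires a uniform connectivity bound. The key input resolving this is the evenness of $\mathrm{TP}(S)^\wedge_p$ and $\mathrm{TC}^-(S)^\wedge_p$ for qrsp $S$: the connectivity of each $\tau_{\ge 2i}$ is controlled uniformly along the Čech nerve, so that the descent spectral sequence converges cleanly. An alternative, perhaps more conceptual path, would be to recognize these filtrations as instances of the even filtration of \cite{HRW}, in which quasi-syntomic descent is built into the formalism from the outset.
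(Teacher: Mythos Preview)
The paper does not give its own proof of this statement: it is quoted verbatim as a result of Bhatt--Morrow--Scholze, with the attribution \cite[Theorem 1.12]{BMS2}, and is used as a black box input to Construction \ref{Const TP, TC^- ani}. Your outline is essentially the argument of \cite{BMS2} itself (double-speed Postnikov on qrsp rings, then quasi-syntomic unfolding), and your identification of the convergence issue and its resolution via uniform connectivity from evenness is accurate; the alternative via the even filtration of \cite{HRW} is also noted in the paper's preamble to the theorem.
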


We may now define the Nygaard-decompletions $\mathrm{TP}^\mathrm{ani}$ and $\mathrm{TC}^{-\mathrm{ani}}$ by animating the motivic filtrations on $\mathrm{TP}^\wedge_p$ and $(\mathrm{TC}^-)^\wedge_p$.

\begin{cons}\label{Const TP, TC^- ani}
Let $\mathrm{Fil}\CAlg^\mathrm{cplt}_\mathrm{ad}$ denote the $\i$-category of filtered $\E$-rings for which the filtration is complete (see e.g.\ \cite[§1.6 Conventions, (4)]{HRW}), and  whose  underlying $\E$-ring is equipped with an adic topology, with respect to which each filtered level is adically complete. Since both adic completion and completion of a filtration and localizations, this $\i$-category has all small colimits, computed as the (both adic and filtration) completion of the corresponding colimit in $\mathrm{FilCAlg}$. 
The motivic filtrations, as recalled above in Theorem \ref{Thm motivic filtrations exist}, may be interpreted as  functors
 \begin{equation}\label{Fil mot eq}
\mathrm{fil}_{\mathrm{mot}}^*(\mathrm{TP})^\wedge_p,\, \mathrm{fil}_{\mathrm{mot}}^*(\mathrm{TC}^-)^\wedge_p :
\CAlg^{\mathrm{qsyn}}_{\Z_p}\to\mathrm{Fil}\CAlg^\mathrm{cplt}_\mathrm{ad}.
\end{equation}
Since $p$-completed polynomial $\Z_p$-algebras $\Z_p[t_1, \ldots, t_n]^\wedge_p$ are quasi-syntomic, which is to say that
$(\CAlg{}^{\mathrm{poly}}_{\Z_p}){}^\wedge_p\subseteq\CAlg_{\Z_p}^\mathrm{qsyn}$,
we can use the identification
$$
(\CAlg^\mathrm{ani}_{\Z_p})^\wedge_p\,\simeq \, \mathcal P_\Sigma\big((\CAlg{}^{\mathrm{poly}}_{\Z_p})^\wedge_p\big)
$$
of $p$-complete animated rings with the animation on the right-hand side. Due to the $\i$-category $\mathrm{FilCAlg}^\mathrm{cplt}_\mathrm{ad}$ being closed under colimits,
we find that the functors of \eqref{Fil mot eq} give rise to essentially unique sifted-colimit-preserving extensions
$$
\mathbf L\mathrm{fil}_{\mathrm{mot}}^*(\mathrm{TP})^\wedge_p,\, \mathbf L\mathrm{fil}_{\mathrm{mot}}^*(\mathrm{TC}^-)^\wedge_p :(\CAlg^{\mathrm{ani}}_{\Z_p})^\wedge_p\to\mathrm{Fil}\CAlg^\mathrm{cplt}_\mathrm{ad}.
$$
Composing with the filtration-forgetting functor $\mathrm{Fil}\CAlg^\mathrm{cplt}_\mathrm{ad}\to\CAlg_\mathrm{ad}$, we obtain the \textit{animated periodic topological cyclic homology} and \textit{animated negative topological cyclic homology} functors respectively
$$
(\mathrm{TP}^\mathrm{ani})^\wedge_p,\, (\mathrm{TC}^{-\mathrm{ani}})^\wedge_p : (\CAlg_{\Z_p}^\mathrm{ani})^\wedge_p \to \CAlg_\mathrm{ad}.
$$
By construction, there are canonical maps of adic $\E$-rings
\begin{equation}\label{comparisani}
\mathrm{TP}^\mathrm{ani}(R)^\wedge_p\to\mathrm{TP}(R)^\wedge_p, \qquad \mathrm{TC}^{-\mathrm{ani}}(R)^\wedge_p\to\mathrm{TC}^-(R)^\wedge_p,
\end{equation}
natural in the $p$-complete animated ring $R$, where in the codomain we implicitly apply the canonical functor $\CAlg^\mathrm{ani}_{\Z}\to\CAlg_{\Z}$ from animated rings to $\E$-algebras over $\Z$. On the other hand, the animated variants of topological cyclic homology are given for a $p$-complete animated ring $R$ explicitly by
$$
\mathrm{TP}^\mathrm{ani}(R)^\wedge_p \simeq \varinjlim_{i\to \infty}\mathbf L\mathrm{fil}_{\mathrm{mot}}^{-i}(\mathrm{TP}(R)^\wedge_p), \qquad
\mathrm{TC}^{-\mathrm{ani}}(R)^\wedge_p := \varinjlim_{i\to \infty}\mathbf L\mathrm{fil}_{\mathrm{mot}}^{-i}(\mathrm{TC}^-(R)^\wedge_p).
$$
It follows that the animated filtrations $\mathbf L\mathrm{fil}_{\mathrm{mot}}^*(\mathrm{TP})^\wedge_p,$ and $\mathbf L\mathrm{fil}_{\mathrm{mot}}^*(\mathrm{TC}^-)^\wedge_p$ may be interpreted as the respective motivic filtrations on $\mathrm{TP}^\mathrm{ani}(R)^\wedge_p$ and $\mathrm{TC}^{-\mathrm{ani}}(R)^\wedge_p$, i.e.
$$
\mathrm{fil}_\mathrm{mot}^*(\mathrm{TP}^\mathrm{ani}(R)^\wedge_p) :=\mathbf L\mathrm{fil}_\mathrm{mot}^*(\mathrm{TP}(R)^\wedge_p),
\quad
\mathrm{fil}_\mathrm{mot}^*(\mathrm{TC}^{-\mathrm{ani}}(R)^\wedge_p) :=\mathbf L\mathrm{fil}_\mathrm{mot}^*(\mathrm{TC}^-(R)^\wedge_p).
$$
\end{cons}

\begin{remark}
There is an obvious $p$-decompleted variant of Construction \ref{Const TP, TC^- ani}, where the only change is that we systematically drop  $p$-completeness. This results in functors
$$
\mathrm{TP}^\mathrm{ani},\, \mathrm{TC}^{-\mathrm{ani}} : \CAlg_{\Z}^\mathrm{ani} \to \CAlg_\mathrm{ad},
$$
as well as motivic filtrations on both. We still land in adic $\E$-rings, complete with respect to the augmentation ideal coming from the $\T$-action. It is easy to show that for a $p$-complete animated ring, $\mathrm{TP}^\mathrm{ani}(R)^\wedge_p$ and $\mathrm{TC}^{-\mathrm{ani}}(R)^\wedge_p$ of Construction \ref{Const TP, TC^- ani} coincide with the respective $p$-completions of $\mathrm{TP}^\mathrm{ani}(R)$ and $\mathrm{TC}^{-\mathrm{ani}}(R)$.
\end{remark}

To verify that what we have just animated into existence indeed constitutes a Nygaard-decompletion of $\mathrm{TP}^\wedge_p$ and $(\mathrm{TC}^-)^\wedge_p$, let us show directly that it coincides with the Frobenius untwists, the previously-constructed Nygaard-decompletion.

\begin{prop}
Under the usual inclusion $\CAlg_{\Z_p}^\mathrm{qsyn}\subseteq(\CAlg^\mathrm{ani}_{\Z_p})^\wedge_p$, there are canonical natural equivalences of adic $\E$-rings
$$
\mathrm{TP}^\mathrm{ani}(R)^\wedge_p\,\simeq \, \mathrm{TP}^{(-1)}(R), \qquad \mathrm{TC}^{-\mathrm{ani}}(R)^\wedge_p\,\simeq\,\mathrm{TC}^{-(-1)}(R)
$$
for all quasi-syntomic $\Z_p$-algebras $R$.
\end{prop}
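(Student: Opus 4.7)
The plan is to reduce both equivalences to the qrsp case via descent, identify the animated and Frobenius-untwisted constructions on qrsp rings via their motivic filtrations, and match them up through the common pullback square over $\mathrm{TP}(R)^\wedge_p$ and $\mathrm{TC}^-(R)^\wedge_p$. Quasi-syntomic descent for $\mathrm{TP}^{(-1)}$ and $\mathrm{TC}^{-(-1)}$ is built into Construction \ref{TC and TP for QSyn}; for the animated versions it would be inherited from the quasi-syntomic descent of absolute (Nygaard-filtered) prismatic cohomology, which by the identifications below governs the associated gradeds of their motivic filtrations.

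Having reduced to qrsp $R$, the first main computation is
\[\mathrm{gr}_\mathrm{mot}^i(\mathrm{TP}^\mathrm{ani}(R)^\wedge_p) \simeq \mathbbl\Delta_R\{i\}[2i], \qquad \mathrm{gr}_\mathrm{mot}^i(\mathrm{TC}^{-\mathrm{ani}}(R)^\wedge_p) \simeq \mathrm{Fil}_{\mathcal N}^i\,\mathbbl\Delta_R\{i\}[2i].\]
Since the absolute prismatic complex and its Nygaard filtration are themselves defined by left Kan extension from $p$-complete polynomial rings (where they agree with their Nygaard-completed counterparts entering Theorem \ref{Thm motivic filtrations exist}(i)), animating that theorem produces these Nygaard-decompleted associated gradeds on qrsp inputs. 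Together with the completeness of the motivic filtration and its concentration in even degrees, the resulting spectral sequence yields that $\mathrm{TP}^\mathrm{ani}(R)^\wedge_p$ and $\mathrm{TC}^{-\mathrm{ani}}(R)^\wedge_p$ are even with homotopy groups matching those of $\mathrm{TP}^{(-1)}(R)$ and $\mathrm{TC}^{-(-1)}(R)$ from Theorem \ref{Theorem Frobenius untwist for TP} and Proposition \ref{Prop homotopy groups of untiwsted TC^-}.

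The canonical comparison maps \eqref{comparisani} then organize by naturality into the commutative square
\[\begin{tikzcd}
\mathrm{TC}^{-\mathrm{ani}}(R)^\wedge_p \arrow{r}\arrow{d} & \mathrm{TP}^\mathrm{ani}(R)^\wedge_p \arrow{d}\\
\mathrm{TC}^-(R)^\wedge_p \arrow{r} & \mathrm{TP}(R)^\wedge_p
\end{tikzcd}\]
in $\CAlg_\mathrm{ad}$, which on $\pi_{2i}$ reduces to the pullback square underlying Proposition \ref{Prop homotopy groups of untiwsted TC^-} and is hence itself Cartesian. Its bottom row coincides with the bottom row of the defining pullback of $\mathrm{TC}^{-(-1)}(R)$ in Construction \ref{Cons untwisted TC^-}, so once we exhibit an equivalence $\mathrm{TP}^\mathrm{ani}(R)^\wedge_p \simeq \mathrm{TP}^{(-1)}(R)$ of adic $\E$-rings over $\mathrm{TP}(R)^\wedge_p$, pullback uniqueness automatically delivers the $\mathrm{TC}^-$-side.

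The main obstacle is thus the $\mathrm{TP}$-identification. The cleanest route I see invokes the explicit localization formula of Remark \ref{Remark 1.3}: for a perfectoid cover $P \to R$, the Bott element $u \in \pi_2(\mathrm{TC}^-(R)^\wedge_p) \simeq \mathrm{Fil}_{\mathcal N}^1\,\widehat{\mathbbl\Delta}_R\{1\}$ admits a canonical lift $\tilde u \in \pi_2(\mathrm{TC}^{-\mathrm{ani}}(R)^\wedge_p) \simeq \mathrm{Fil}_{\mathcal N}^1\,\mathbbl\Delta_R\{1\}$, whose image in the even periodic $\mathrm{TP}^\mathrm{ani}(R)^\wedge_p$ is invertible. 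By universality this yields a canonical map $\mathrm{TC}^{-\mathrm{ani}}(R)^\wedge_p[\tilde u^{-1}]^\wedge_{(p,d)} \to \mathrm{TP}^\mathrm{ani}(R)^\wedge_p$, which the telescopic colimit computation of Remark \ref{Remark 1.3}, applied in the Nygaard-decompleted setting, shows to be an equivalence. The analogous formula \eqref{Formula invert u} presenting $\mathrm{TP}^{(-1)}(R)$ as $\mathrm{TC}^-(R)^\wedge_p[u^{-1}]^\wedge_{(p,d)}$, together with the compatibility of $\tilde u$ with $u$ under the comparison maps, then furnishes the desired equivalence $\mathrm{TP}^\mathrm{ani}(R)^\wedge_p \simeq \mathrm{TP}^{(-1)}(R)$ compatibly over $\mathrm{TP}(R)^\wedge_p$.
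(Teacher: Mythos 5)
Your proposal is viable but follows a genuinely different route from the paper. The paper animates the \emph{Frobenius-untwisted} filtered functors $\mathrm{fil}^*_\mathrm{mot}\big(\mathrm{TP}^{(-1)}\big)$ and $\mathrm{fil}^*_\mathrm{mot}\big(\mathrm{TC}^{-(-1)}\big)$ alongside the completed ones, and proves that the resulting comparison map of animated complete filtered $\E$-rings is an equivalence by checking it on the polynomial generators $S=\Z[t_1,\dots,t_n]^\wedge_p$, where on associated gradeds it reduces to the Nygaard-completeness of $\mathbbl\Delta_S$. You instead work directly on qrsp rings: you compute $\mathrm{gr}^i_\mathrm{mot}$ of the animated functors as decompleted (Nygaard-filtered) prismatic cohomology via the left Kan extension characterization of the latter, recover the $\mathrm{TP}$-statement through the telescope/localization formula of Remark \ref{Remark 1.3}, and deduce the $\mathrm{TC}^-$-statement by pullback uniqueness against Construction \ref{Cons untwisted TC^-}. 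Both arguments ultimately rest on the same two inputs --- Nygaard-completeness of prismatic cohomology for $p$-completed polynomial rings, and the left Kan extension property of Nygaard-filtered prismatic cohomology from such rings --- the paper uses the first explicitly and the second only implicitly (it does not spell out why the animation of $\mathrm{TP}^{(-1)}$ returns $\mathrm{TP}^{(-1)}$ on quasi-syntomic inputs), whereas you use the second explicitly and the first inside it. Your route has the advantage of producing the homotopy groups of the animated functors on qrsp rings along the way; the paper's is shorter as a filtered statement.

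Two steps in your sketch need more care. First, in the localization step you must say along which map $\tilde u$ becomes invertible: under the \emph{canonical} map $\mathrm{TC}^{-\mathrm{ani}}(R)^\wedge_p\to\mathrm{TP}^{\mathrm{ani}}(R)^\wedge_p$ the image of $\tilde u$ in $\pi_2\simeq\mathbbl\Delta_R\{1\}$ is $\phi^{-1}(d)$, which is not a unit, so the universal property gives nothing; it is the (animated) divided Frobenius that carries $\tilde u$ to a unit, exactly as in \eqref{Formula invert u}, and one must also check that the resulting equivalence is compatible with the comparison maps to $\mathrm{TP}(R)^\wedge_p$ for the pullback-uniqueness step to apply. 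Second, a square of spectra that is Cartesian on each $\pi_{2i}$ need not be Cartesian: you additionally need the odd homotopy of the actual homotopy pullback to vanish, i.e.\ the surjectivity hidden in the pullback description of the Nygaard filtration from \cite[Construction 7.11]{Antieau-Krause-Nikolaus}. This is the same input underlying Proposition \ref{Prop homotopy groups of untiwsted TC^-}, so it is available, but it should be invoked rather than elided.
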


\begin{proof}
Replace all the occurrences of $\mathrm{TP}^\wedge_p$ and $(\mathrm{TC}^-)^\wedge_p$
 in Construction \ref{Const TP, TC^- ani} with their Frobenius untwists $\mathrm{TP}^{(-1)}$, $\mathrm{TC}^{-(-1)}$ from Section \ref{Section 1}, on which the motivic filtration may for defined via the even filtration. In this way we obtain new functors
$$
\mathrm{fil}_\mathrm{mot}^*\big(\mathrm{TP}^{(-1),\, \mathrm{ani}}\big), \mathrm{fil}_\mathrm{mot}^*\big(\mathrm{TC}^{-(-1),\,\mathrm{ani}}\big) : (\CAlg_{\Z_p}^\mathrm{ani})^\wedge_p \to \mathrm{Fil}\CAlg^\mathrm{cplt}_\mathrm{ad},
$$
which fit for every $p$-complete animated ring $R$ into a canonical and natural square
$$
\begin{tikzcd}
 \mathrm{fil}_\mathrm{mot}^*\big(\mathrm{TC}^{-(-1),\,\mathrm{ani}}(R)\big) \arrow{d}{\mathrm{can}} \arrow{r}{} &  \mathrm{fil}_\mathrm{mot}^*(\mathrm{TC}^{-\mathrm{ani}}(R)) \arrow{d}{\mathrm{can}}\\
  \mathrm{fil}_\mathrm{mot}^*\big(\mathrm{TP}^{(-1),\,\mathrm{ani}}(R)\big) \arrow{r}{} &  \mathrm{fil}_\mathrm{mot}^*(\mathrm{TP}^{\mathrm{ani}}(R)).
\end{tikzcd}
$$
We claim that the horizontal maps are equivalences of complete filtered $\E$-rings. Since the animated versions are obtained by animation, the above square in question may be obtained as a sifted colimit of squares of the form
$$
\begin{tikzcd}
 \mathrm{fil}_\mathrm{mot}^*\big(\mathrm{TC}^{-(-1)}(S)\big) \arrow{d}{\mathrm{can}} \arrow{r}{} &  \mathrm{fil}_\mathrm{mot}^*(\mathrm{TC}^{-}(S)) \arrow{d}{\mathrm{can}}\\
  \mathrm{fil}_\mathrm{mot}^*\big(\mathrm{TP}^{(-1)}(S)\big) \arrow{r}{} &  \mathrm{fil}_\mathrm{mot}^*(\mathrm{TP}(S)).
\end{tikzcd}
$$
for $p$-completed polynomial rings $S\simeq \Z[t_1, \ldots, t_n]^\wedge_p$ with maps $S\to R$. Since all the filtered objects in sight are complete by definition, it suffices to show that the horizontal maps are equivalences in the corresponding associated graded diagrams for all $i\in\Z$
$$
\begin{tikzcd}
 \mathrm{gr}_\mathrm{mot}^i\big(\mathrm{TC}^{-(-1)}(S)\big) \arrow{d}{\mathrm{can}} \arrow{r}{} &  \mathrm{gr}_\mathrm{mot}^i(\mathrm{TC}^{-}(S)) \arrow{d}{\mathrm{can}}\\
  \mathrm{gr}_\mathrm{mot}^i\big(\mathrm{TP}^{(-1)}(S)\big) \arrow{r}{} &  \mathrm{gr}_\mathrm{mot}^i(\mathrm{TP}(S)).
\end{tikzcd}
$$
By Theorem \ref{Thm motivic filtrations exist}, and the corresponding computation of the motivic filtration of the non-Nygaard-complete versions for the Frobenius untwists that follows from the discussion in Section \ref{Section 1}, the associated graded diagram may be rewritten in terms of prisms as
$$
\begin{tikzcd}
 \mathrm{Fil}_{\mathcal N}^i\,{\mathbbl\Delta}_S\{i\}[2i] \arrow{d}{} \arrow{r}{} &  \mathrm{Fil}_{\mathcal N}^i\,\widehat{\mathbbl\Delta}_S\{i\}[2i] \arrow{d}{}\\
  \mathbbl\Delta_S\{i\}[2i]\arrow{r}{} &  \widehat{\mathbbl\Delta}_S\{i\}[2i].
\end{tikzcd}
$$
It remains to justify that the horizontal arrows are equivalences in the case of the completed polynomial ring $S=\Z_p[t_1, \ldots, t_n]^\wedge_p$. The equivalence statement for the upper horizontal arrow follows from the bottom one by passage to the Nygaard filtered pieces. On the other hand, the lower horizontal arrow being an equivalence is precisely the assertion that the prismatic cohomology of the completed polynomial ring is Nygaard-complete, and is proved in \cite[Proposition 5.8.2]{BLa}.
\end{proof}

\begin{remark}
One advantageous aspect of the approach to Nygaard-decompletion via animation is that it offers a natural suggestion for how to proceed outside the $p$-adic setting. That is to say, if $X$ is a scheme (classical or even derived) over $\Z$, the formal spectral stacks of the form  
$$
\pounds^\mathrm{gl}X\,:=\hspace{-0.3em} \varinjlim_{\substack{\Spec(A)\,\in \,\mathrm{SpAff}_{/X}\\ \mathrm{THH}(A)\,\in\,\CAlg^\mathrm{ev}}} \Spf(\mathrm{TP}^\mathrm{ani}(A)),
\quad\,
\text{\textlira}^\mathrm{gl}X\,:=\hspace{-0.3em} \varinjlim_{\substack{\Spec(A)\,\in \,\mathrm{SpAff}_{/X}\\ \mathrm{THH}(A)\,\in\,\CAlg^\mathrm{ev}}} \Spf(\mathrm{TC}^{-\mathrm{ani}}(A))^\mathrm{evp}
$$
naturally suggest themselves as the  \textit{global versions} of the  spherical Tate-loop and Nygaard-loop space of $X$ respectively. We are using ``global" here in analogy with \cite[Section 6.4]{BLa}, since these spectral stacks will  recover (a Nygaard-decompleted version of) global prismatic cohomology in the sense of \cite[Construction 6.4.6]{BLa} on $\E$-rings of global sections. Their underlying classical stacks $(\pounds^\mathrm{gl}X)^\heart$ and $(\text{\textlira}^\mathrm{gl}X)^\heart$ may be taken to be versions of ``prismatization and filtered prismatization of $X$ over $\Z$", at least under some assumptions on $X$ (e.g.\ quasi-syntomic over $\Z$). The author is presently unaware just how computationally feasible this suggestion is; we leave such considerations to future work.
\end{remark}

\end{document}